\newtheorem{theorem}{Theorem}[section]
\newtheorem{lemma}[theorem]{Lemma}
\newtheorem{proposition}{Proposition}[section]
\theoremstyle{definition}
\newtheorem{definition}[theorem]{Definition}
\theoremstyle{remark}
\newtheorem{remark}[theorem]{Remark}
\numberwithin{equation}{section}
\begin{document}

\title[polyatomic BGK model]{Cauchy problem for the ellipsoidal BGK model for polyatomic particles}

\author{Sa Jun Park, SEOK-BAE YUN }
\address{Department of Mathematics, Sungkyunkwan University, Suwon 440-746, Republic of Korea}
\email{parksajune@skku.edu}
\address{Department of Mathematics, Sungkyunkwan University, Suwon 440-746, Republic of Korea}
\email{sbyun01@skku.edu}



\keywords{BGK model, Ellipsoidal BGK model, Boltzmann equation, Polyatomic gases, Kinetic theory of gases, Cauchy problem}
\begin{abstract}
We establish the existence and uniqueness of mild solutions for the polyatomic ellipsoidal BGK model, which is a relaxation type kinetic model describing the evolution of polyatomic gaseous system at the mesoscopic level.
\end{abstract}

\maketitle
\section{Introduction}
The derivation of the celebrated Boltzmann equations relies heavily on the assumption that the gas consists of monatomic particles,
which is not the case for most of the realistic gases. Efforts to derive Boltzmann type kinetic models soon confront with the difficulty that it is virtually impossible to write  the pre - and post - collision velocities in an explicit form, since polyatomic molecules can possess arbitrarily complicated structures.
In search of tractable model equation for polyatomic gases that avoids such difficulties, a BGK type model was suggested
as a generalization of the ellipsoidal BGK model \cite{ALPP,BL,Brun,BS2,PL,Shen}:
\begin{align}\label{ESBGK}
\begin{split}
\partial_t f+v\cdot \nabla_x f &= A_{\nu,\theta} (\mathcal{M}_{\nu,\theta}(f)-f) \cr
f(0,x,v,I)&= f_0(x,v,I).
\end{split}
\end{align}
Unlike the monatomic case, a new variable $I$ related to the internal energy due to the rotational and vibrational motions of the molecules is introduced so that the velocity distribution function $f(t,x,v,I)$  represents the number density  on $(x,v)\in\mathbb{T}_x^3 \times \mathbb{R}_v^3$ at time $t$ with internal energy $I^{2/\delta}\in\mathbb{R}^+$, where $\delta$ is the number
of degrees of freedom except for the translational motion.
We consider the fixed collision frequency $A_{\nu,\theta}=1/(1-\nu+\nu\theta)$ throughout this paper.
Two relaxation parameters $-1/2 < \nu <1$ and $0\leq \theta \leq 1$ are chosen in such a way that Prandtl number 
 and the second viscosity coefficient
 computed through the Chapmann-Enskog expansion, agrees  with the physical data.
(See \cite{ABLP,BS,Cai,Shen}).\newline
The polyatomic Gaussian $\mathcal{M}_{\nu,\theta}(f)$ reads
\begin{align}
\mathcal{M}_{\nu,\theta}(f) = \frac{\rho\Lambda_{\delta}}{
\sqrt{\det(2\pi \mathcal{T}_{\nu,\theta})}(T_\theta)^\frac{\delta}{2}}\exp\left(-\frac{1}{2}(v-U)^{\top}\mathcal{T}_{\nu,\theta}^{-1}(v-U)-\frac{I^{\frac{2}{\delta}}}{T_{\theta}}\right)
\end{align}
with normalizing factor
\begin{align*}
\Lambda_{\delta}^{-1} = \int_{\mathbb{R}^+} \exp(-I^{\frac{2}{\delta}})dI.	
\end{align*}
The macroscopic local density $\rho(t,x)$, bulk velocity $U(t,x)$, stress tensor $\Theta(t,x)$ and internal energy $E_{\delta}(t,x)$ are defined respectively by
\begin{align}
\begin{split}
\rho(t,x) &= \int_{\mathbb{R}^3 \times \mathbb{R}^+} f(t,x,v,I) dvdI \cr
U(t,x) &= \frac{1}{\rho}\int_{\mathbb{R}^3 \times \mathbb{R}^+} vf(t,x,v,I) dvdI \cr
\Theta(t,x) &= \frac{1}{\rho}\int_{\mathbb{R}^3 \times \mathbb{R}^+} f(t,x,v,I)\big(v-U(t,x)\big)\otimes\big(v-U(t,x)\big)dvdI \cr
E_{\delta}(t,x) &=\int_{\mathbb{R}^3 \times \mathbb{R}^+} \left(\frac{1}{2}|v-U(t,x)|^2+I^{\frac{2}{\delta}}\right)f(t,x,v,I) dvdI.
\end{split}
\end{align}
 We split the internal energy $E_{\delta}$ into
the internal energy from the translational motion  $E_{tr}$ and the one from the non-translational motion $E_{I,\delta}$ :
\begin{align*}
E_{tr} &=\int_{\mathbb{R}^3 \times \mathbb{R}^+} \frac{1}{2}|v-U|^2f dvdI, \cr
E_{I,\delta} &=\int_{\mathbb{R}^3 \times \mathbb{R}^+} I^{\frac{2}{\delta}}f dvdI,
\end{align*}
and define the corresponding temperatures $T_{\delta}, T_{tr}$ and $T_{I,\delta}$ by the equi-partition principle:
\begin{align*}
E_{\delta}=\frac{3+\delta}{2}\rho T_{\delta},\quad
E_{tr}=\frac{3}{2}\rho T_{tr},\quad
E_{I,\delta}=\frac{\delta}{2}\rho T_{I,\delta}.
\end{align*}
Note that  $T_{\delta}$ is a convex combination of $T_{tr}$ and $T_{I,\delta}$:
\begin{align}\label{convex}
T_{\delta}=\frac{3}{3+\delta}T_{tr} + \frac{\delta}{3+\delta}T_{I,\delta}.
\end{align}
Then, the relaxation temperature $T_{\theta}$ and the corrected temperature tensor $\mathcal{T}_{\nu,\theta}$
are defined as follows:
\begin{align}
\begin{split}
T_{\theta} &= \theta T_{\delta}+(1-\theta)T_{I,\delta}, \cr
\mathcal{T}_{\nu,\theta} &= \theta T_{\delta}Id+(1-\theta)\left\{(1-\nu)T_{tr}Id+\nu\Theta\right\}.
\end{split}
\end{align}

The relaxation operator satisfies the following cancellation properties:
\begin{align}\label{cancellation}
\begin{split}
&\int_{\mathbb{T}^3 \times \mathbb{R}^3 \times \mathbb{R}^+}
(\mathcal{M}_{\nu,\theta}(f)-f)dxdvdI = 0 \cr
&\int_{\mathbb{T}^3 \times \mathbb{R}^3 \times \mathbb{R}^+}
v(\mathcal{M}_{\nu,\theta}(f)-f)dxdvdI = 0\cr
&\int_{\mathbb{T}^3 \times \mathbb{R}^3 \times \mathbb{R}^+}
\left(\frac{1}{2}|v|^2+I^{\frac{2}{\delta}}\right)(\mathcal{M}_{\nu,\theta}(f)-f)dxdvdI = 0,
\end{split}
\end{align}
yielding the conservation of mass, momentum and energy respectively.
The entropy dissipation for the polyatomic gas was proved by Andries and Perthame et al \cite{ALPP}. (See also
\cite{BS2,PY2})
\[
\frac{d}{dt}\int_{\mathbb{T}^{3}\times\mathbb{R}^{3}\times \mathbb{R}^+} f(t)\ln f(t) dxdvdI\leq 0.
\]

%
%
%
%
\section{main result}
\begin{definition}
Let $T>0$. $f \in C_+([0,T];\|\cdot\|_{L_q^{\infty}})$ is said to be a mild solution for (\ref{ESBGK}) if it satisfies
\begin{align*} f(t,x,v,I)=e^{-A_{\nu,\theta}t}f_0(x-vt,v,I)+A_{\nu,\theta}\int_0^te^{-A_{\nu,\theta}(t-s)}\mathcal{M}_{\nu,\theta}(f)(x-(t-s)v,v,s,I)ds,
\end{align*}
\end{definition}
\noindent where the weighted norm $\|\cdot\|_{L^{\infty}_q}$ is defined by
\begin{align*}
\|f(t)\|_{L_q^{\infty}}=ess\sup_{x,v,I} |f(t,x,v,I)(1+|v|^2+I^{\frac{2}{\delta}})^\frac{q}{2}|.
\end{align*}
Our main result is as follows:
\begin{theorem}\label{MainThm}
Let $0<\theta\leq 1$, $-1/2<\nu<1$, $\delta>0$ and $q>5+\delta$. Suppose there exist positive constants $C_u, C_l$ and $C_1$ such that
\begin{align*}
 \|f_0\|_{L_q^{\infty}} < C_u,\quad\int_{\mathbb{R}^3\times\mathbb{R}^+}f_0(x-vt,v,I)dvdI\geq C_l >0.
\end{align*}
Then, for any final time $T>0$, there exists a unique mild solution $f\in C_+([0,T];\|\cdot\|_{L_q^{\infty}})$  for (\ref{ESBGK}) such that
\begin{enumerate}
\item $f$ is bounded on $t\in[0,T)$ as
\begin{align*}
	\|f(t)\|_{L_q^{\infty}} \leq e^{C_1t}\|f_0\|_{L_q^{\infty}}.
\end{align*}
\item There exist positive constants $C_{T,f_0}$, $C_{T,f_0,\delta}$ and $C_{T,f_0,\delta,q}$ such that
\begin{align*}
&\rho(x,t) \geq C_{T,f_0},\cr
&T_{\delta}(x,t)\geq C_{T,f_0,\delta},\cr
&\rho(x,t)+|U(x,t)|+T_{\delta}(x,t)\leq C_{T,f_0,\delta,q}.
\end{align*}
\item Conservation laws of mass, momentum and energy hold:
\begin{align*}
\frac{d}{dt}\int_{\mathbb{T}^3\times\mathbb{R}^3\times\mathbb{R}^+}f\left(1,v,\frac{1}{2}|v|^2+I^{\frac{2}{\delta}}\right)dxdvdI=0.
\end{align*}
\item H-theorem holds:
\begin{align*}
\frac{d}{dt}\int_{\mathbb{T}^3\times\mathbb{R}^3\times\mathbb{R}^+}f\ln fdxdvdI =
\int_{\mathbb{T}^3\times\mathbb{R}^3\times\mathbb{R}^+}
\big(\mathcal{M}_{\nu,\theta}(f)-f\big)\ln f dxdvdI\leq 0.
\end{align*}
\end{enumerate}
\end{theorem}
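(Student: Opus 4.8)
\emph{Proof idea.} The plan is to solve the mild formulation by a contraction mapping argument on a short time interval, to upgrade it to the interval $[0,T]$ by a continuation argument fed by a priori estimates, and finally to read off the conservation laws and the $H$-theorem.

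\smallskip
\emph{Setup and a priori control of the moments.} For a nonnegative $g$ set $\Phi g(t,x,v,I)=e^{-A_{\nu,\theta}t}f_0(x-vt,v,I)+A_{\nu,\theta}\int_0^t e^{-A_{\nu,\theta}(t-s)}\mathcal{M}_{\nu,\theta}(g)(x-(t-s)v,v,s,I)\,ds$, so that mild solutions are fixed points of $\Phi$. I work in the convex set $S_{T_*}$ of $g\in C_+([0,T_*];\|\cdot\|_{L^\infty_q})$ with $g\geq e^{-A_{\nu,\theta}t}f_0(x-vt,v,I)$ pointwise and $\|g(t)\|_{L^\infty_q}\leq 2C_u$. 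On $S_{T_*}$ the pointwise lower bound gives $\rho_g(t,x)\geq e^{-A_{\nu,\theta}t}\int f_0(x-vt,v,I)\,dvdI\geq e^{-A_{\nu,\theta}T_*}C_l$, while $\|g(t)\|_{L^\infty_q}\leq 2C_u$ together with $q>5+\delta$ makes $(1+|v|^2+I^{2/\delta})^{-q/2}$ and $|v|^2(1+|v|^2+I^{2/\delta})^{-q/2}$ integrable on $\mathbb{R}^3\times\mathbb{R}^+$ (the first needs $q>3+\delta$, the second $q>5+\delta$), yielding upper bounds for $\rho_g$, $\rho_g|U_g|$ and $E_{\delta,g}$, hence for $|U_g|$ and $T_{\delta,g}$. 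A standard spreading estimate --- splitting $E_{\delta,g}=\int(\tfrac12|v-U_g|^2+I^{2/\delta})g$ according to whether $\tfrac12|v-U_g|^2+I^{2/\delta}$ is below or above a threshold $R$, bounding the mass on the bounded region by $\|g\|_{L^\infty}\,|\{\cdots\}|\leq C_\delta R^{(3+\delta)/2}\|g\|_{L^\infty_q}$ and optimising in $R$ --- produces a \emph{lower} bound for $T_{\delta,g}$ in terms of the lower bound on $\rho_g$ and the upper bound on $\|g\|_{L^\infty_q}$.

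\smallskip
\emph{Control of the polyatomic Gaussian.} The crux is that on $S_{T_*}$ the operator $\mathcal{M}_{\nu,\theta}$ is well behaved. Since $\Theta\succeq 0$ with $\mathrm{tr}\,\Theta=3T_{tr}$, the eigenvalues of $(1-\nu)T_{tr}\mathrm{Id}+\nu\Theta$ lie between $\min(1-\nu,1+2\nu)T_{tr}$ and $\max(1-\nu,1+2\nu)T_{tr}$, which are $\geq 0$ because $-1/2<\nu<1$; adding $\theta T_\delta\mathrm{Id}$ and using $0<\theta\leq1$, $T_{tr}\leq\tfrac{3+\delta}{3}T_\delta$ and the two-sided bound on $T_\delta$ gives $c\,\mathrm{Id}\preceq\mathcal{T}_{\nu,\theta}\preceq C\,\mathrm{Id}$ with $c,C>0$ depending only on the a priori bounds; likewise $c\leq T_\theta\leq C$ (here $\theta>0$ is essential, explaining the hypothesis). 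Consequently the Gaussian decay of $\mathcal{M}_{\nu,\theta}(g)$ dominates the polynomial weight at a uniform rate, so $\|\mathcal{M}_{\nu,\theta}(g)(s)\|_{L^\infty_q}\leq C\rho_g(s)\leq C'\|g(s)\|_{L^\infty_q}$. Moreover $\mathcal{M}_{\nu,\theta}$ is a smooth function of the moment data $(\rho,U,\mathcal{T}_{\nu,\theta},T_\theta)$ on the compact parameter range just identified, and each component of that data is Lipschitz in $g$ with respect to $\|\cdot\|_{L^\infty_q}$ (differences of moments are bounded by $\int(1+|v|+|v|^2+I^{2/\delta})|f-g|\leq C\|f-g\|_{L^\infty_q}$, the lower bound on $\rho$ being used to pass to $U$, $\Theta$, $T_{tr}$, $T_{I,\delta}$); hence $\|\mathcal{M}_{\nu,\theta}(f)-\mathcal{M}_{\nu,\theta}(g)\|_{L^\infty_q}\leq L\|f-g\|_{L^\infty_q}$ on $S_{T_*}$.

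\smallskip
\emph{Fixed point, continuation, and the remaining items.} Using $A_{\nu,\theta}\int_0^t e^{-A_{\nu,\theta}(t-s)}\,ds=1-e^{-A_{\nu,\theta}t}$ and $\mathcal{M}_{\nu,\theta}(g)\geq0$ one gets $\Phi g\geq e^{-A_{\nu,\theta}t}f_0(x-vt,v,I)$ and $\|\Phi g(t)\|_{L^\infty_q}\leq e^{-A_{\nu,\theta}t}C_u+(1-e^{-A_{\nu,\theta}t})\sup_s\|\mathcal{M}_{\nu,\theta}(g)(s)\|_{L^\infty_q}$, so $\Phi(S_{T_*})\subseteq S_{T_*}$ once $T_*$ is small (depending on $C_u,C_l,\delta,q,\nu,\theta$); combined with the Lipschitz estimate this also gives $\|\Phi f-\Phi g\|\leq(1-e^{-A_{\nu,\theta}T_*})L\,\|f-g\|$, a contraction for $T_*$ small. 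Banach's theorem yields a unique mild solution on $[0,T_*]$, and since any mild solution lies in such a set, uniqueness holds in the full class $C_+([0,T];\|\cdot\|_{L^\infty_q})$. To reach an arbitrary $T$, I run a continuity argument: on any subinterval on which the solution exists with $\|f(t)\|_{L^\infty_q}\leq 2e^{C_1t}\|f_0\|_{L^\infty_q}$, all the a priori bounds above hold with constants depending on $T$ but uniform in $t$, so $\|\mathcal{M}_{\nu,\theta}(f)(s)\|_{L^\infty_q}\leq C\|f(s)\|_{L^\infty_q}$ and Gronwall's inequality improves the bound to $\|f(t)\|_{L^\infty_q}\leq e^{C_1t}\|f_0\|_{L^\infty_q}$ provided $C_1$ is chosen equal to the resulting Gronwall constant; the local existence time at each restart is then bounded below uniformly on $[0,T]$, so the solution extends to $[0,T]$. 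This gives item (1), and collecting the a priori bounds gives item (2). Items (3) and (4) follow by testing the equation, in its mild/integrated form (legitimate because $q>5+\delta$ supplies the needed integrability), against $1$, $v$, $\tfrac12|v|^2+I^{2/\delta}$ and using the cancellation identities (\ref{cancellation}), respectively against $\ln f+1$ and invoking the entropy inequality of \cite{ALPP} for the sign. The main obstacle is the package of estimates in the preceding paragraph: the two-sided bound on $\mathcal{T}_{\nu,\theta}$ --- in particular its positive definiteness when $\nu<0$, where $\nu>-1/2$ and $\mathrm{tr}\,\Theta=3T_{tr}$ are genuinely used --- together with the Lipschitz dependence of $\mathcal{M}_{\nu,\theta}(f)$ on $f$ in the weighted norm; everything else is a bookkeeping of constants.
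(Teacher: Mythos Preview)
Your proposal is correct and rests on the same core estimates as the paper --- the two-sided spectral bound on $\mathcal{T}_{\nu,\theta}$ (the paper's Lemma~3.1), the weighted bound $\|\mathcal{M}_{\nu,\theta}(g)\|_{L^\infty_q}\leq C\|g\|_{L^\infty_q}$ (Proposition~4.1), the lower bound on $T_\delta$ coming from $\rho\leq C\|f\|_{L^\infty_q}T_\delta^{(3+\delta)/2}$ (Lemma~3.2), and the Lipschitz property of $\mathcal{M}_{\nu,\theta}$ (Proposition~5.1). The route, however, differs: you run a Banach contraction on a short interval and then a continuation/Gronwall argument, whereas the paper sets up an explicit Picard iteration $f^{n+1}=\Phi(f^n)$ on the \emph{full} interval $[0,T]$, shows by induction that every iterate stays in the solution space $\Omega$ (so the exponential bound $\|f^n(t)\|\leq e^{C_1t}\|f_0\|$ holds for all $n$ without any short-time restriction), and then proves $\{f^n\}$ is Cauchy via the familiar factorial gain $\|f^{n+1}-f^n\|\leq (A_{\nu,\theta}C_{Lip})^n t^n/n!\,\|f_0\|$. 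The paper's argument buys you a cleaner global-in-time construction with no continuation step; what makes this possible is that their constant $C_{\mathcal{M}}$ in $\|\mathcal{M}_{\nu,\theta}(f)\|_{L^\infty_q}\leq C_{\mathcal{M}}\|f\|_{L^\infty_q}$ depends only on $\nu,\theta,\delta,q$ and \emph{not} on the size of $f$ (Lemmas~3.2--3.4 are scale-free ratio estimates), so $C_1=A_{\nu,\theta}(C_{\mathcal{M}}-1)$ can be fixed once and for all. In your version the constant $C$ is obtained after pinning down two-sided bounds on the moments, which in turn depend on the assumed ceiling $2e^{C_1t}\|f_0\|$; this circularity is harmless once you observe (as the paper does) that $C$ is in fact independent of that ceiling, but you should make this point explicit to close the bootstrap cleanly. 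Your approach has the compensating virtue of being the textbook fixed-point template, and it would adapt more readily to settings where the analogue of $C_{\mathcal{M}}$ genuinely depends on the solution size.
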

\begin{remark}
When $\theta=0$, all the above estimates break down. Therefore, this case should be considered separately. See Section 7 for the discussion of this case.
\end{remark}

Ever since it was introduced in \cite{BGK,Wel}, the BGK model has seen huge applications in engineering and physics. The first mathematical
study was carried out by Perthame in \cite{Perthame}, where  the existence of weak solutions was proven under the assumption of finite mass, momentum, energy and entropy. Perthame and Pulvirenti \cite{PP} then considered the class of solution space in which the uniqueness is guaranteed. It was later extended to the whole space
\cite{Mischler}, and to $L^p$ solutions \cite{ZH}. The Cauchy problem in the presence of external force or mean field was considered in \cite{BC,WZ,Zhang}. Ukai studied a stationary problem on a bounded interval in \cite{Ukai-BGK}. The existence and asymptotic behavior near a global maxwellian were studied in \cite{Bello,Yun,Zhang}. For various macroscopic limits of BGK type models, see \cite{DMOS,LT,MMM,Mellet,SR1,SR2}. Recently, Holway's ellipsoidal generalization of the original BGK model (ES-BGK model) was re-suggested in \cite{ALPP} with the first proof of $H$-theorem, and studied analytically in a series of paper \cite{BS,BY,DWY,PY1,Yun2,Yun3,Yun33,Yun4}.
Mathematical study on the polyatomic BGK model is in its initial state. See \cite{BS2} for the derivation of this model.
In \cite{PY2}, the entropy -entropy production estimate was derived.
\cite{Yun} studies the existence in the near-global-polyatomic Maxwellian-regime. A dichotomy in the dissipative estimate was also observed.

For the numerical results of BGK model - monatomic, or polyatomic - we refer to \cite{ABLP,Cai,FJ,GT,GRS,Issautier,PPuppo,RSY,Z-Stru} and references therein. A nice survey on various mathematical and physical issues on kinetic equations can be found in
\cite{C,CIP,CC,GL,PL,Sone,Sone2,UT,V}.
\newline

Following is the notational convention kept throughout this paper:
\begin{itemize}
\item Constants, usually denoted by $C$, are defined generically. Their value may vary line by line but can be
computed in principle.
%
\item When necessary, we use $C_{a,b,c,..}$, to show the dependence, not necessarily exclusive, on $a,b,c\cdots$.
\item For $\kappa\in\mathbb{R}^3$, $\kappa^{\top}$ denotes its transpose.
\item For symmetric $n\times n$ matrices $A$ and $B$, $A\leq B$ means $B-A$ is positive definite. That is, $k^{\top}\big\{B-A\big\}k\geq0$ for all $k\in \mathbb{R}^n$.

\end{itemize}

The paper is organized as follows: In the following Section 3, we establish several estimates for macroscopic variables.
In Section 4, we define our solution space and show that the approximate solutions lie in that space for all steps of iterations.
Section 5 is devoted to showing that the relaxation operator is Lipschitz continuous in the solution space.
In Section 6, we combine all the previous results to complete the existence proof.
The reason why the case $\theta=0$ should be treated independently is briefly  discussed in Section 7.
In the appendix, we prove the cancellation property of the relaxation operator.

\section{Estimates on macroscopic fields}
\begin{lemma}\label{Temperature}
Let $\delta>0$, $-1/2<\nu<1$ and $0<\theta\leq 1$. Suppose  $\rho>0$, $T_{tr}>0$ and $T_{I,\delta}>0$. Then temperature tensor $\mathcal{T}_{\nu,\theta}$ and
the relaxation temperature $T_{\theta}$ satisfy the following equivalence type  estimates:
\begin{align*}
&(1)\ \theta T_{\delta}Id \leq \mathcal{T}_{\nu,\theta} \leq \frac{1}{3}C_{\nu}\big\{3+\delta(1-\theta)\big\} T_{\delta}Id, \cr
&(2)\ \theta T_{\delta} \leq T_{\theta} \leq \frac{1}{\delta}\big\{\delta+3(1-\theta)\big\} T_{\delta},
\end{align*}
where $C_{\nu}=\max_{\nu}\{1-\nu,1+2\nu\}$.
\end{lemma}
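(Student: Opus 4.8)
The plan is to reduce both parts to elementary scalar inequalities among $T_{\delta}$, $T_{tr}$, $T_{I,\delta}$ coming from the convex-combination identity \eqref{convex}, together with two spectral facts about the stress tensor $\Theta$: it is positive semidefinite and its trace equals $3T_{tr}$. The first is clear since $\Theta = \rho^{-1}\int f\,(v-U)\otimes(v-U)\,dvdI$ is, for $f\geq 0$ and $\rho>0$, an average of rank-one positive semidefinite matrices; the second follows from $\operatorname{tr}\big((v-U)\otimes(v-U)\big)=|v-U|^{2}$, which gives $\operatorname{tr}\Theta = 2E_{tr}/\rho = 3T_{tr}$. Since a symmetric positive semidefinite matrix has every eigenvalue between $0$ and its trace, this yields $0\leq\Theta\leq 3T_{tr}Id$.

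Next I would multiply the bound $0\leq\Theta\leq 3T_{tr}Id$ by the scalar $\nu$, keeping track that the direction reverses when $-1/2<\nu<0$, and add $(1-\nu)T_{tr}Id$ to obtain the enveloping estimate
\[
\min\{1-\nu,\,1+2\nu\}\,T_{tr}Id \ \leq\ (1-\nu)T_{tr}Id+\nu\Theta \ \leq\ C_{\nu}T_{tr}Id ,
\]
where the left-hand constant is positive precisely because $\nu>-1/2$, and $C_{\nu}=\max\{1-\nu,1+2\nu\}\geq 1$ with equality only at $\nu=0$. From \eqref{convex} and $T_{I,\delta}>0$ I also record $T_{tr}\leq\frac{3+\delta}{3}T_{\delta}$, and from \eqref{convex} and $T_{tr}>0$ the companion $T_{I,\delta}\leq\frac{3+\delta}{\delta}T_{\delta}$.

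For part (1): the lower bound $\theta T_{\delta}Id\leq\mathcal{T}_{\nu,\theta}$ is immediate, because the left inequality above makes $(1-\theta)\{(1-\nu)T_{tr}Id+\nu\Theta\}$ positive semidefinite. For the upper bound, combining the right inequality above with $T_{tr}\leq\frac{3+\delta}{3}T_{\delta}$ gives $\mathcal{T}_{\nu,\theta}\leq\big[\theta+(1-\theta)C_{\nu}\frac{3+\delta}{3}\big]T_{\delta}Id$, and it then remains to verify the algebraic inequality $\theta+(1-\theta)C_{\nu}\frac{3+\delta}{3}\leq\frac{1}{3} C_{\nu}\{3+\delta(1-\theta)\}$; expanding both sides, the terms $\frac{\delta}{3}(1-\theta)C_{\nu}$ cancel and what is left is $\theta(1-C_{\nu})\leq 0$, true since $\theta>0$ and $C_{\nu}\geq1$. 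For part (2): $\theta T_{\delta}\leq T_{\theta}$ is immediate from $T_{I,\delta}>0$, and inserting $T_{I,\delta}\leq\frac{3+\delta}{\delta}T_{\delta}$ into $T_{\theta}=\theta T_{\delta}+(1-\theta)T_{I,\delta}$ and simplifying gives $T_{\theta}\leq\frac{1}{\delta}\{\delta+3(1-\theta)\}T_{\delta}$.

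There is no genuine obstacle here; the only point demanding care is the sign bookkeeping when $\nu<0$, where multiplying the matrix inequality $\Theta\leq 3T_{tr}Id$ by $\nu$ flips its direction, so the cases $\nu\geq0$ and $-1/2<\nu<0$ must be treated separately — and this is exactly what produces the $\max/\min$ structure in $C_{\nu}$. Everything else is the convexity in \eqref{convex} together with scalar arithmetic.
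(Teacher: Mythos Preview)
Your proof is correct and follows essentially the same approach as the paper: both arguments bound the quadratic form of $\Theta$ by $0$ below and $3T_{tr}$ above (you via the eigenvalue--trace inequality for positive semidefinite matrices, the paper via Cauchy--Schwarz on $\{(v-U)\cdot k\}^{2}$, which is the same fact), split on the sign of $\nu$, and then invoke the convex identity \eqref{convex} to pass from $T_{tr}$ and $T_{I,\delta}$ to $T_{\delta}$. Your intermediate upper bound $\theta+(1-\theta)C_{\nu}\frac{3+\delta}{3}$ is in fact slightly sharper than the paper's, which absorbs the $\theta T_{\delta}$ term into $C_{\nu}\theta T_{\delta}$ earlier; your final algebraic check $\theta(1-C_{\nu})\leq 0$ is exactly the loss incurred by that absorption.
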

\begin{proof}
(1) \emph{\bf (a) Upper bound}: Recalling the definition of $\mathcal{T}_{\nu,\theta}$, we write
\begin{align*}
\rho\mathcal{T}_{\nu,\theta}&=\theta \rho T_{\delta}Id+(1-\theta)\left\{(1-\nu)\rho T_{tr}Id+\nu \rho \Theta\right\}\cr
&=\theta\rho T_{\delta}Id+(1-\theta)\bigg\{(1-\nu)\rho T_{tr}Id+\nu \int_{\mathbb{R}^3\times\mathbb{R}^+} f (v-U)\otimes(v-U) dvdI\bigg\}.
\end{align*}
From the identity
\[
k^{\top}\big\{(v-U)\otimes (v-U)\big\}k=\big\{(v-U)\cdot k\big\}^2,\quad \mbox{for }k \in \mathbb{R}^3,
\]
we derive
\begin{align}\label{that}
k^{\top}\{\rho\mathcal{T}_{\nu,\theta}\}k
&=\theta\rho T_{\delta}|k|^2 +
(1-\theta)\left\{(1-\nu)\rho T_{tr}|k|^2+\nu \int_{\mathbb{R}^3\times\mathbb{R}^+} f \big\{(v-U)\cdot k\big\}^2 dvdI\right\}.
\end{align}
If $0\leq\nu<1$, using the Cauchy-Schwartz inequality, we get
\begin{align*}
\int_{\mathbb{R}^3\times\mathbb{R}^+} f \big\{(v-U)\cdot k\big\}^2 dvdI
\leq\int_{\mathbb{R}^3\times\mathbb{R}^+} f |v-U|^2|k|^2 dvdI =3\rho T_{tr}|k|^2,
\end{align*}
so that
\begin{align*}
k^{\top}\{\rho\mathcal{T}_{\nu,\theta}\}k
&\leq \theta\rho T_{\delta}|k|^2+(1-\theta)\left\{(1-\nu)\rho T_{tr}|k|^2+3\nu\rho T_{tr}|k|^2\right\} \cr
&=\theta\rho T_{\delta}|k|^2+ (1-\theta)(1+2\nu)\rho T_{tr}|k|^2\cr
&\leq(1+2\nu)\rho\left\{\theta T_{\delta}+ (1-\theta) T_{tr}\right\}|k|^2.
\end{align*}
In the case of $-1/2 < \nu < 0$, the last term in (\ref{that}) is non-positive. Thus
\begin{align*}
k^{\top}\{\rho\mathcal{T}_{\nu,\theta}\}k
&\leq \theta\rho T_{\delta}|k|^2+(1-\theta)(1-\nu)\rho T_{tr}|k|^2\cr
&\leq (1-\nu)\rho\left\{\theta T_{\delta}|k|^2+(1-\theta) T_{tr}\right\}|k|^2
\end{align*}
Combining these two cases, we arrive at
\begin{align}\label{from}
k^{\top}\left\{\rho\mathcal{T}_{\nu,\theta}\right\}k
\leq  \max\{1-\nu,1+2\nu\} \rho \left\{(1-\theta)T_{tr}+\theta T_{\delta}\right\}|k|^2.
\end{align}

Now, we recall (\ref{convex}) to see
\begin{equation}\label{derive}
T_{\delta}= \frac{3}{3+\delta}T_{tr}+\frac{\delta}{3+\delta}T_{I,\delta}\geq \frac{3}{3+\delta}T_{tr},
\end{equation}
or
\[
T_{tr}\leq \frac{3+\delta}{3}T_{\delta}
\]
to derive from (\ref{from}) that
\[
k^{\top}\left\{\rho\mathcal{T}_{\nu,\theta}\right\}k\leq  \frac{1}{3}\max\{1-\nu,1+2\nu\}\rho\big\{3+\delta(1-\theta)\big\}T_{\delta}|k|^2.
\]
This implies the desired estimate, since we assumed $\rho>0$.\newline
\noindent{\bf (b) Lower bound:} Denote the last term in (\ref{that}) by $A$:
\[
A=(1-\nu)\rho T_{tr}|k|^2+\nu \int_{\mathbb{R}^3\times\mathbb{R}^+} f \big\{(v-U)\cdot k\big\}^2 dvdI.
\]
Then, when $0<\nu<1$, $A$ satisfies
\begin{align*}
A\geq (1-\nu)\rho T_{tr}|k|^2,
\end{align*}
whereas we have
\begin{align*}
A\geq(1-\nu)\rho T_{tr}|k|^2+\nu\left(\int_{\mathbb{R}^3}f|v-U|^2dv\right)|k|^2
=(1+2\nu)\rho T_{tr}|k|^2,
\end{align*}
for $-1/2<\nu\leq0$. Therefore, we conclude from our assumption on $\rho$ and $T_{tr}$ that $A\geq0$.
Thus, we deduce from (\ref{that})
\begin{align}\label{from2}
k^{\top}\left\{\rho\mathcal{T}_{\nu,\theta}\right\}k
\geq  \theta\rho T_{\delta}|k|^2+(1-\theta)A\geq\theta \rho T_{\delta}|k|^2,
\end{align}
which gives the desired result.
\newline

\noindent (2) From the definition of $T_{\delta}$ (\ref{convex}), we have
\begin{equation}\label{derive2}
T_{\delta}= \frac{3}{3+\delta}T_{tr}+\frac{\delta}{3+\delta}T_{I,\delta}\geq \frac{\delta}{3+\delta}T_{I,\delta},
\end{equation}
so that
\[
T_{I,\delta}\leq \frac{3+\delta}{\delta}T_{\delta}.
\]
Therefore,
\begin{align*}
T_{\theta}&= (1-\theta)T_{I,\delta}+\theta T_{\delta}\cr
&\leq (1-\theta)\bigg(\frac{3+\delta}{\delta}T_{\delta}\bigg)+\theta T_{\delta}\cr
&=\frac{1}{\delta}\left\{\delta+3(1-\theta)\right\} T_{\delta}.
\end{align*}
The lower bound comes directly from the definition:
\begin{equation*}
T_{\theta}= (1-\theta)T_{I,\delta}+\theta T_{\delta}\geq \theta T_{\delta}.
\end{equation*}
\end{proof}
\begin{lemma}\label{Lemma1} Assume $\rho>0$ and $\|f\|_{L_q^{\infty}}<\infty$. Then we have
\begin{align*}
\rho\leq C_{\delta}\|f\|_{L_q^{\infty}}T_{\delta}^{\frac{3+\delta}{2}}
\end{align*}
for
\[
C_{\delta}=2^{\frac{7}{2}}\pi^2(3+\delta)^{\frac{1+\delta}{2}}\delta.
\]
\end{lemma}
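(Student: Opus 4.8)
The plan is to prove the bound by a truncation argument: by the energy constraint the bulk of $\int f\,dvdI$ is carried on a region where $|v-U|$ and $I$ are small, and there the trivial pointwise bound $f\le\|f\|_{L_q^\infty}$ controls the contribution by the (small) volume of that region.

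Fix cut-off parameters $a,b>0$ and split
\begin{align*}
\rho=\int_{\mathbb{R}^3\times\mathbb{R}^+}f\,dvdI=\mathrm{I}+\mathrm{II}+\mathrm{III},
\end{align*}
where $\mathrm{I}=\int_{\{|v-U|\le a\}\cap\{I\le b\}}f\,dvdI$, $\mathrm{II}=\int_{\{|v-U|>a\}}f\,dvdI$ and $\mathrm{III}=\int_{\{|v-U|\le a\}\cap\{I>b\}}f\,dvdI$. On the core region the weight $(1+|v|^2+I^{2/\delta})^{q/2}\ge 1$, so $f\le\|f\|_{L_q^\infty}$ there and $\mathrm{I}\le\|f\|_{L_q^\infty}\,\tfrac43\pi a^3\,b$. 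For the two tails I would use Chebyshev's inequality against the translational energy and the internal energy: since $|v-U|>a$ forces $\tfrac12|v-U|^2>\tfrac12a^2$,
\begin{align*}
\mathrm{II}\le\frac{2}{a^2}\int_{\mathbb{R}^3\times\mathbb{R}^+}\tfrac12|v-U|^2f\,dvdI=\frac{2E_{tr}}{a^2}=\frac{3\rho T_{tr}}{a^2},
\end{align*}
and since $I>b$ forces $I^{2/\delta}>b^{2/\delta}$,
\begin{align*}
\mathrm{III}\le\frac{1}{b^{2/\delta}}\int_{\mathbb{R}^3\times\mathbb{R}^+}I^{2/\delta}f\,dvdI=\frac{E_{I,\delta}}{b^{2/\delta}}=\frac{\delta\rho T_{I,\delta}}{2b^{2/\delta}}.
\end{align*}

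Now choose the cut-offs so that $\mathrm{II}+\mathrm{III}\le\tfrac12\rho$ — for instance $a^2=12\,T_{tr}$ and $b^{2/\delta}=2\delta\,T_{I,\delta}$, which gives $\mathrm{II}=\mathrm{III}=\tfrac14\rho$ (this uses only $\rho>0$, so that $T_{tr}$ and $T_{I,\delta}$ are defined). Absorbing the tails into the left-hand side leaves $\tfrac12\rho\le\mathrm{I}\le\|f\|_{L_q^\infty}\tfrac43\pi(12T_{tr})^{3/2}(2\delta T_{I,\delta})^{\delta/2}$, hence $\rho\le C_\delta'\,\|f\|_{L_q^\infty}\,T_{tr}^{3/2}T_{I,\delta}^{\delta/2}$ for an explicit dimensional constant $C_\delta'$. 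Finally, the weighted arithmetic–geometric mean inequality applied to the convex combination (\ref{convex}) gives
\begin{align*}
T_{tr}^{\frac{3}{3+\delta}}T_{I,\delta}^{\frac{\delta}{3+\delta}}\le\frac{3}{3+\delta}T_{tr}+\frac{\delta}{3+\delta}T_{I,\delta}=T_\delta,
\end{align*}
and raising to the power $\tfrac{3+\delta}{2}$ yields $T_{tr}^{3/2}T_{I,\delta}^{\delta/2}\le T_\delta^{(3+\delta)/2}$, which produces the stated estimate.

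I expect the only real obstacle to be the bookkeeping of the constant: to reach exactly $C_\delta=2^{7/2}\pi^2(3+\delta)^{(1+\delta)/2}\delta$ one should sharpen the volume factor by truncating instead on the single ball $\{\tfrac12|v-U|^2+I^{2/\delta}\le R\}$ (whose measure is a constant times $R^{(3+\delta)/2}$, the constant involving a Beta factor $B(\delta/2,5/2)$ that then has to be estimated) and tune the one cut-off $R$ via $E_\delta=\tfrac{3+\delta}{2}\rho T_\delta$; this is a purely $\delta$-dependent dimensional computation rather than anything conceptual. It is also worth checking explicitly that nothing beyond the two stated hypotheses is used: $\|f\|_{L_q^\infty}<\infty$ enters only through the trivial bound on the core term $\mathrm{I}$, and $\rho>0$ enters only to make the macroscopic temperatures, and hence the cut-offs, meaningful (no cancellation of $\rho$ is needed, since $\rho$ is absorbed rather than divided out).
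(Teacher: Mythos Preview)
Your argument is correct and reaches the conclusion, but the route differs from the paper's. The paper truncates on a single ellipsoidal ball $\{\frac{1}{3+\delta}|v-U|^2+\frac{2}{3+\delta}I^{2/\delta}\le R^2\}$, bounds the tail by a single Chebyshev step against the \emph{total} energy $\rho T_\delta$, computes the volume of the core via a four-dimensional spherical change of variables (yielding the explicit $\delta$-dependent constant), and then optimizes over $R$. You instead use a product box $\{|v-U|\le a\}\times\{I\le b\}$, two separate Chebyshev bounds against $E_{tr}$ and $E_{I,\delta}$, and then recombine $T_{tr}^{3/2}T_{I,\delta}^{\delta/2}$ into $T_\delta^{(3+\delta)/2}$ via weighted AM--GM. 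Your approach is more elementary---no four-dimensional Jacobian, no optimization---but, as you correctly note, it does not produce the stated constant $C_\delta$; the paper's single-ball method is precisely the sharpening you sketch in your last paragraph. One small point worth making explicit: your cut-offs require $T_{tr},T_{I,\delta}>0$, which does follow from $\rho>0$ together with $\|f\|_{L_q^\infty}<\infty$ (a bounded $f$ cannot concentrate mass on a null set), but you should say so rather than leave it implicit.
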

\begin{proof}
We divide the integral domain as
\begin{align}\label{Lemma11}
\begin{split}
\rho &= \int_{\mathbb{R}^3 \times \mathbb{R}^+}f dvdI \cr
&\leq \int_{\frac{1}{3+\delta}|v-U|^2+\frac{2}{3+\delta}I^{\frac{2}{\delta}}>R^2}fdvdI+  \int_{\frac{1}{3+\delta}|v-U|^2+\frac{2}{3+\delta}I^{\frac{2}{\delta}}\leq R^2}fdvdI \cr
&\equiv I_1+I_2.
\end{split}
\end{align}
From the definition of $T_{\delta}$, we see that
\begin{align*}
I_1 &\leq \frac{1}{R^2}\int_{\frac{1}{3+\delta}|v-U|^2+\frac{2}{3+\delta}I^{\frac{2}{\delta}}>R^2}\left(\frac{1}{3+\delta}|v-U|^2+\frac{2}{3+\delta}I^{\frac{2}{\delta}}\right)fdvdI\cr
&\leq \frac{1}{R^2}\rho T_{\delta}.
\end{align*}
For $I_2$, we estimate
\begin{align*}
I_2&\leq \left(\int_{\frac{1}{3+\delta}|v-U|^2+\frac{2}{3+\delta}I^{\frac{2}{\delta}}\leq R^2}dvdI\right)\|f\|_{L_{q}^{\infty}},
\end{align*}
and make a change of variable:
\begin{align*}
\sqrt{\frac{1}{3+\delta}}(v_1-U_1)&=r\sin\varphi\cos\theta\sin k,\cr
\sqrt{\frac{1}{3+\delta}}(v_2-U_2)&=r\sin\varphi\sin\theta\sin k, \cr
\sqrt{\frac{1}{3+\delta}}(v_3-U_3)&=r\cos\varphi\sin k,\cr
\sqrt{\frac{2}{3+\delta}}I^{\frac{1}{\delta}} &= r\cos k,
\end{align*}
for $0\leq r\leq R, 0\leq\varphi\leq\pi,  0\leq\theta\leq2\pi,  0\leq k \leq \frac{\pi}{2}$.
The the Jacobian
\[
J=\frac{\partial(v_1,v_2,v_3,I)}{\partial(r,\varphi,\theta,k)},
\]
is computed as
\begin{align*}
|J|&=\left(3+\delta\right)^{\frac{3}{2}}\left(\frac{3+\delta}{2}\right)^{\frac{\delta}{2}} \cr
&\times
\left|\det\left(\begin{array}{cccc}
			\cos\theta\sin\varphi\sin k&
			r\cos\theta\sin k \cos\varphi&
			-r\sin\varphi\sin\theta\sin k&
			r\sin\varphi\cos\theta\cos k
			\cr
			\sin\varphi\sin\theta\sin k&
			r\cos\varphi\sin\theta\sin k&
			r\sin\varphi\cos\theta\sin k&
			r\sin\varphi\sin\theta\cos k
			\cr
			\cos\varphi\sin k &
			-r\sin\varphi\sin k&
			0&
			r\cos\varphi\cos k
			\cr
			\delta r^{\delta-1}\cos^{\delta}k&
			0&
			0&
			\delta r^{\delta}\cos^{\delta-1}k \sin k
\end{array}
\right)\right|\cr
&=\delta\left(3+\delta\right)^{\frac{3}{2}}\left(\frac{3+\delta}{2}\right)^{\frac{\delta}{2}} r^{\delta+2}|\sin\varphi\cos^{\delta-1}k\sin^2k|.
\end{align*}
so that
\begin{align*}
I_2&\leq\|f\|_{L_{q}^{\infty}}
\int_{0}^{\frac{\pi}{2}}\int_{0}^{\pi}\int_{0}^{2\pi}\int_{0}^{R} \delta(3+\delta)^{\frac{3}{2}}\left(\frac{3+\delta}{2}\right)^{\frac{\delta}{2}}r^{\delta+2}|\sin\varphi\cos^{\delta-1}k\sin^2k| drd\theta d\varphi dk  \cr
&\leq \|f\|_{L_{q}^{\infty}}
\left\{(3+\delta)^{\frac{3}{2}}\left(\frac{3+\delta}{2}\right)^{\frac{\delta}{2}}\frac{2\pi^2\delta}{3+\delta}      \right\}R^{3+\delta} \cr
&= \|f\|_{L_{q}^{\infty}}\left\{2^{\frac{2-\delta}{2}}\pi^2(3+\delta)^{\frac{1+\delta}{2}}\delta \right\}R^{3+\delta} .
\end{align*}
Thus, $(\ref{Lemma11})$ can be estimated as follows:
\begin{align*}
\rho \leq
\frac{1}{R^2}\rho T_{\delta}+\left\{2^{\frac{2-\delta}{2}}\pi^2(3+\delta)^{\frac{1+\delta}{2}}\delta \right\}R^{3+\delta}\|f\|_{L_q^{\infty}}.
\end{align*}
We optimize this by setting
\[
R^{5+\delta}=\frac{\rho T_{\delta}}{\left\{2^{\frac{2-\delta}{2}}\pi^2(3+\delta)^{\frac{1+\delta}{2}}\delta \right\}\|f\|_{L_q^{\infty}}}
\]
to get
\begin{align*}
\rho
&\leq
2\left\{2^{\frac{2-\delta}{2}}\pi^2(3+\delta)^{\frac{1+\delta}{2}}\delta \right\}^{\frac{2}{5+\delta}}\left\{\rho T_{\delta}\right\}^{\frac{3+\delta}{5+\delta}},
\end{align*}
which implies
\begin{align*}
\rho
\leq
\left\{2^{\frac{7}{2}}\pi^2(3+\delta)^{\frac{1+\delta}{2}}\delta\right\}\|f\|_{L_q^{\infty}}T_{\delta}^{\frac{3+\delta}{2}}.
\end{align*}
This completes the proof.
\end{proof}
\begin{lemma}\label{Lemma2} Assume $\rho>0$ and $\|f\|_{L_q^{\infty}}>0$. Then, for $q>5+\delta$, we have
\begin{align*}
\rho(T_\delta+|U|^2)^{\frac{q-\delta-3}{2}} \leq C_{\delta,q}\|f\|_{L_{q}^{\infty}},
\end{align*}
where constant $C_{\delta,q}$ is given by
\[
C_{\delta,q}=\left\{\frac{2^{\frac{q-2\delta-1}{2}}\pi^2(3+\delta)^{\frac{q}{2}}\delta}{q-\delta-5}\right\}.
\]
\end{lemma}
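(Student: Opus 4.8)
The plan is to mimic the proof of Lemma~\ref{Lemma1}: reduce the estimate to a bare velocity/internal-energy moment of $f$ centered at the origin, split that moment at a free radius $R$, control the exterior piece through $\|f\|_{L_q^{\infty}}$, and optimize in $R$. First I would remove $U$ and $T_{\delta}$ in favour of $\int(|v|^2+I^{2/\delta})f\,dvdI$. Writing $E_{tr}=\frac12\int|v-U|^2f\,dvdI=\frac12\big(\int|v|^2f\,dvdI-\rho|U|^2\big)\le\frac12\int|v|^2f\,dvdI$ and $E_{\delta}=E_{tr}+E_{I,\delta}$ gives $E_{\delta}\le\int(|v|^2+I^{2/\delta})f\,dvdI$, hence $\rho T_{\delta}=\frac{2}{3+\delta}E_{\delta}\le\frac{2}{3+\delta}\int(|v|^2+I^{2/\delta})f\,dvdI$; moreover the Cauchy--Schwarz inequality applied to each component yields $\rho|U|^2=\frac1\rho\big|\int vf\,dvdI\big|^2\le\int|v|^2f\,dvdI$. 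Adding these,
\[
\rho\big(T_{\delta}+|U|^2\big)\le\frac{5+\delta}{3+\delta}\int_{\mathbb{R}^3\times\mathbb{R}^+}\big(|v|^2+I^{2/\delta}\big)f\,dvdI .
\]

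Next, for any $R>0$ I would split according to whether $|v|^2+I^{2/\delta}$ is at most or larger than $R^2$: the inner part of the integral above is at most $R^2\rho$, while on the outer part I use $|v|^2+I^{2/\delta}\le 1+|v|^2+I^{2/\delta}$ together with $f\le\|f\|_{L_q^{\infty}}(1+|v|^2+I^{2/\delta})^{-q/2}$, so that
\[
\int_{\mathbb{R}^3\times\mathbb{R}^+}\big(|v|^2+I^{2/\delta}\big)f\,dvdI\le R^2\rho+\|f\|_{L_q^{\infty}}\int_{|v|^2+I^{2/\delta}>R^2}\big(1+|v|^2+I^{2/\delta}\big)^{1-\frac q2}dvdI .
\]
The heart of the matter is the last tail integral, which I would evaluate with the change of variables of Lemma~\ref{Lemma1}, now centered at the origin (spherical-type coordinates with scalings $\sqrt{3+\delta}$ in $v$ and $\sqrt{(3+\delta)/2}$ in $I^{1/\delta}$), in which $r^2=\frac{1}{3+\delta}|v|^2+\frac{2}{3+\delta}I^{2/\delta}$ and the Jacobian carries the factor $r^{\delta+2}$. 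Since $1+|v|^2+I^{2/\delta}\ge r^2$ one has $(1+|v|^2+I^{2/\delta})^{1-\frac q2}\le r^{2-q}$, and since $|v|^2+I^{2/\delta}\le(3+\delta)r^2$ the region $\{|v|^2+I^{2/\delta}>R^2\}$ is contained in $\{r>R/\sqrt{3+\delta}\}$; the radial part is therefore dominated by $\int_{R/\sqrt{3+\delta}}^{\infty}r^{\delta+4-q}\,dr$, which converges \emph{precisely} because $q>5+\delta$ and is a constant multiple of $R^{5+\delta-q}/(q-\delta-5)$. Collecting the Jacobian constants and the (crudely bounded) angular factors exactly as in Lemma~\ref{Lemma1} gives a bound of the form $C_{\delta,q}\|f\|_{L_q^{\infty}}R^{5+\delta-q}$, with the constant carrying the factor $1/(q-\delta-5)$.

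Combining the three displays, for every $R>0$ one gets $\rho(T_{\delta}+|U|^2)\le\frac{5+\delta}{3+\delta}\big(R^2\rho+C_{\delta,q}\|f\|_{L_q^{\infty}}R^{5+\delta-q}\big)$. Taking $R^2$ equal to a fixed multiple of $T_{\delta}+|U|^2$ so that the first term on the right equals half of the left-hand side, one absorbs the $\rho R^2$ term; since then $R^{5+\delta-q}=(R^2)^{1-\frac{q-\delta-3}{2}}$ is a constant times $(T_{\delta}+|U|^2)^{1-\frac{q-\delta-3}{2}}$, what remains is exactly $\rho(T_{\delta}+|U|^2)^{\frac{q-\delta-3}{2}}\le C_{\delta,q}\|f\|_{L_q^{\infty}}$ after collecting constants. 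The one genuinely delicate step is the tail integral: its convergence is borderline, so one must keep the full weight $(1+|v|^2+I^{2/\delta})^{1-q/2}$ rather than pass to a cruder power of $|v|^2+I^{2/\delta}$ — this is exactly where the hypothesis $q>5+\delta$ is needed, and it is sharp. Everything else is routine, and the change of variables can be quoted verbatim from the proof of Lemma~\ref{Lemma1}.
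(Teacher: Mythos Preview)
Your proof is correct and follows the same strategy as the paper: reduce to a centered moment, split at a radius $R$, bound the tail through $\|f\|_{L_q^\infty}$ and the change of variables of Lemma~\ref{Lemma1}, then optimize in $R$. The only difference is that the paper starts from the \emph{exact identity} $\rho\big(T_{\delta}+\tfrac{1}{3+\delta}|U|^2\big)=\int_{\mathbb{R}^3\times\mathbb{R}^+}\big(\tfrac{1}{3+\delta}|v|^2+\tfrac{2}{3+\delta}I^{2/\delta}\big)f\,dvdI$ (obtained by expanding $|v-U|^2$) rather than your Cauchy--Schwarz bound, which tidies the constants and the splitting region but changes nothing essential.
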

\begin{proof}
From the definition of $T_{\delta}$, we write
\begin{align*}
\rho\left(T_{\delta}+\frac{1}{3+\delta}|U|^2\right)
&= \int_{\mathbb{R}^3\times\mathbb{R}^+} \left(\frac{1}{3+\delta}|v|^2+\frac{2}{3+\delta}I^{\frac{2}{\delta}}\right)f dvdI.
\end{align*}
We then split the integral into the following two part as
\begin{align}\label{Lemma22}
\begin{split}
\rho\left(T_{\delta}+\frac{1}{3+\delta}|U|^2\right)
&= \int_{\frac{1}{3+\delta}|v|^2+\frac{2}{3+\delta}I^{\frac{2}{\delta}}>R^2} \left(\frac{1}{3+\delta}|v|^2+\frac{2}{3+\delta}I^{\frac{2}{\delta}}\right)f dvdI \cr
&+ \int_{\frac{1}{3+\delta}|v|^2+\frac{2}{3+\delta}I^{\frac{2}{\delta}}\leq R^2} \left(\frac{1}{3+\delta}|v|^2+\frac{2}{3+\delta}I^{\frac{2}{\delta}}\right)f dvdI\cr
&=I_1+I_2.
\end{split}
\end{align}
The estimate for $I_2$ is simple:
\begin{align*}
I_2\leq R^2\int_{\frac{1}{3+\delta}|v|^2+\frac{2}{3+\delta}I^{\frac{2}{\delta}}\leq R^2} f dvdI\leq R^2\rho.
\end{align*}
For $I_1$, we  extract $\|f\|_{L^{\infty}_q}$ out of the integral:
\begin{align*}
I_1
&\leq \int_{\frac{1}{3+\delta}|v|^2+\frac{2}{3+\delta}I^{\frac{2}{\delta}}>R^2}
\frac{\left(\frac{1}{3+\delta}|v|^2+\frac{2}{3+\delta}I^{\frac{2}{\delta}}\right)^{\frac{q}{2}}f}{\left(\frac{1}{3+\delta}|v|^2+\frac{2}{3+\delta}I^{\frac{2}{\delta}}\right)^{\frac{q-2}{2}}} ~dvdI\cr
&\leq\|f\|_{L_q^{\infty}}
\int_{\frac{1}{3+\delta}|v|^2+\frac{2}{3+\delta}I^{\frac{2}{\delta}}>R^2}\frac{1}{\left(\frac{1}{3+\delta}|v|^2+\frac{2}{3+\delta}I^{\frac{2}{\delta}}\right)^{\frac{q-2}{2}}}dvdI,
\end{align*}
and use the same change of variable as in the proof of the previous lemma to estimate
\begin{align*}
I_1&\leq\|f\|_{L_q^{\infty}} \int_{0}^{\frac{\pi}{2}}\int_{0}^{\pi}\int_{0}^{2\pi}\int_{R}^{\infty}\frac{\delta\left(3+\delta\right)^{\frac{3}{2}}\left(\frac{3+\delta}{2}\right)^{\frac{\delta}{2}} r^{\delta+2}|\sin\varphi\cos^{\delta-1}k\sin^2k|}{r^{q-2}}dr d\theta d\varphi dk \cr
&\leq \|f\|_{L_q^{\infty}} \left\{\frac{2\pi^2\delta\left(3+\delta\right)^{\frac{3}{2}}\left(\frac{3+\delta}{2}\right)^{\frac{\delta}{2}}}{q-\delta-5}\right\} R^{\delta+5-q}\cr
&=\|f\|_{L_q^{\infty}}
\left\{\frac{2^{\frac{2-\delta}{2}}\pi^2(3+\delta)^{\frac{3+\delta}{2}}\delta}{q-\delta-5}\right\}R^{\delta+5-q}.
\end{align*}
Inserting these computations  into (\ref{Lemma22}), we get
\begin{align*}
\rho\left(T_{\delta}+\frac{1}{3+\delta}|U|^2\right)
&\leq \rho R^2+\left\{\frac{2^{\frac{2-\delta}{2}}\pi^2(3+\delta)^{\frac{3+\delta}{2}}\delta}{q-\delta-5}\right\}\|f\|_{L_q^{\infty}}R^{\delta+5-q}.
\end{align*}
Now, take
\[
R^{\delta+3-q}=\left\{\frac{q-\delta-5}{2^{\frac{2-\delta}{2}}\pi^2(3+\delta)^{\frac{3+\delta}{2}}\delta}\right\}\frac{\rho}{\|f\|_{L_q^{\infty}}},
\]
to get
\begin{align*}
\rho\left(T_{\delta}+\frac{1}{3+\delta}|U|^2\right) \leq 2\bigg\{\frac{2^{\frac{2-\delta}{2}}\pi^2(3+\delta)^{\frac{3+\delta}{2}}\delta}{q-\delta-5}\bigg\}^{\frac{2}{q-\delta-3}}\rho^{\frac{\delta+5-q}{\delta+3-q}}\|f\|_{L_q^{\infty}}^{\frac{2}{q-\delta-3}}.
\end{align*}
This implies
\begin{align*}
\rho(T_{\delta}+|U|^2)^{\frac{q-\delta-3}{2}}
&\leq
\{2(3+\delta)\}^{\frac{q-\delta-3}{2}} \left\{\frac{2^{\frac{2-\delta}{2}}\pi^2(3+\delta)^{\frac{3+\delta}{2}}\delta}{q-\delta-5}\right\}
\|f\|_{L_q^{\infty}} \cr
&=
\left\{\frac{2^{\frac{q-2\delta-1}{2}}\pi^2(3+\delta)^{\frac{q}{2}}\delta}{q-\delta-5}\right\}\|f\|_{L_q^{\infty}},
\end{align*}
which completes the proof.
\end{proof}

\begin{lemma}\label{Lemma3}
Assume $\|f\|_{L_q^{\infty}},\, \rho, \,T_{\delta}>0$. Then we have
\begin{align*}
\frac{\rho|U|^{3+\delta+q}}{[(T_{\delta}+|U|^2)T_{\delta}]^{\frac{3+\delta}{2}}}
\leq C_{\delta,q}\|f\|_{L_q^{\infty}}
\end{align*}
where $C_{\delta,q}=2^{\frac{11+2\delta+2q}{2}}\pi^2(3+\delta)^{2+\delta}\delta$.
\end{lemma}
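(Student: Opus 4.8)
The plan is to split the estimate according to whether the bulk velocity is large or small compared with $\sqrt{T_\delta}$; a convenient threshold is $|U|^2 = 8(3+\delta)T_\delta$. In the small–velocity regime the desired bound reduces to Lemma~\ref{Lemma2}, and in the large–velocity regime I will prove the sharper weighted bound $\rho|U|^q \le C\|f\|_{L_q^\infty}T_\delta^{(3+\delta)/2}$ by rerunning the splitting argument used for Lemma~\ref{Lemma1}.

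\emph{Small velocity, $|U|^2 \le 8(3+\delta)T_\delta$.} Bounding $|U|^{3+\delta}=(|U|^2)^{(3+\delta)/2}\le (T_\delta+|U|^2)^{(3+\delta)/2}$ in the numerator, $|U|^q \le (8(3+\delta))^{q/2}T_\delta^{q/2}$, and $T_\delta \le T_\delta+|U|^2$ in the denominator, one gets
\[
\frac{\rho|U|^{3+\delta+q}}{[(T_\delta+|U|^2)T_\delta]^{(3+\delta)/2}} \ \le\ (8(3+\delta))^{q/2}\,\rho\,T_\delta^{(q-\delta-3)/2}\ \le\ (8(3+\delta))^{q/2}\,\rho\,(T_\delta+|U|^2)^{(q-\delta-3)/2},
\]
and the right–hand side is $\le (8(3+\delta))^{q/2}C_{\delta,q}\|f\|_{L_q^\infty}$ by Lemma~\ref{Lemma2}; this is where the hypothesis $q>5+\delta$ enters.

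\emph{Large velocity, $|U|^2 > 8(3+\delta)T_\delta$.} Set $r^2 = \frac{1}{3+\delta}|v-U|^2 + \frac{2}{3+\delta}I^{2/\delta}$, so that $\int r^2 f\,dvdI = \rho T_\delta$ (because $\int(\tfrac12|v-U|^2+I^{2/\delta})f\,dvdI = E_\delta = \tfrac{3+\delta}{2}\rho T_\delta$), and decompose $\rho = \int_{r\le R}f\,dvdI + \int_{r>R}f\,dvdI$ with the choice $R = \sqrt{2T_\delta}$. The outer part is controlled by $\frac1{R^2}\int r^2 f\,dvdI = \frac{\rho}{2}$, exactly as in Lemma~\ref{Lemma1}. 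For the inner part, the case hypothesis gives $\sqrt{3+\delta}\,R < |U|/2$, hence $|v-U|<|U|/2$ and $|v|>|U|/2$ on $\{r\le R\}$, so there $1+|v|^2+I^{2/\delta}>|U|^2/4$ and $f < 2^q|U|^{-q}\|f\|_{L_q^\infty}$; consequently
\[
\int_{r\le R}f\,dvdI \ \le\ 2^q|U|^{-q}\|f\|_{L_q^\infty}\int_{r\le R}dvdI \ =\ 2^q|U|^{-q}\|f\|_{L_q^\infty}\,V_\delta\,(2T_\delta)^{(3+\delta)/2},
\]
where $\int_{r\le R}dvdI = V_\delta R^{3+\delta}$ with the explicit constant $V_\delta$ already computed in the proof of Lemma~\ref{Lemma1}. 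Adding the two pieces and absorbing $\rho/2$ yields $\rho|U|^q \le C_{\delta,q}\|f\|_{L_q^\infty}T_\delta^{(3+\delta)/2}$; combining with $(T_\delta+|U|^2)T_\delta \ge |U|^2 T_\delta$,
\[
\frac{\rho|U|^{3+\delta+q}}{[(T_\delta+|U|^2)T_\delta]^{(3+\delta)/2}} \ \le\ \frac{\rho|U|^q}{T_\delta^{(3+\delta)/2}} \ \le\ C_{\delta,q}\|f\|_{L_q^\infty}.
\]
Taking the larger of the two constants completes the argument.

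The step I expect to cause the most trouble — and the reason this is not an immediate algebraic consequence of Lemmas~\ref{Lemma1} and~\ref{Lemma2} — is the presence of $T_\delta^{(3+\delta)/2}$ in the denominator with a \emph{negative} power. The only available estimate producing a positive power of $T_\delta$ is Lemma~\ref{Lemma1}, $\rho \le C\|f\|_{L_q^\infty}T_\delta^{(3+\delta)/2}$, but it already uses up the entire factor $\rho$, so any Hölder interpolation between Lemmas~\ref{Lemma1} and~\ref{Lemma2} is driven back to Lemma~\ref{Lemma1} alone and leaves an uncontrolled power of $|U|$. The large–velocity regime therefore has to be handled by genuinely redoing the integral estimate with the velocity weight retained; the demand that the ball $\{r\le R\}$, on which the pointwise bound is applied, stay inside $\{|v|\ge|U|/2\}$ is exactly what dictates the dichotomy and pins down its threshold.
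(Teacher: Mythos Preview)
Your argument is correct, but it is genuinely different from the paper's proof. The paper does \emph{not} introduce a dichotomy on $|U|$ versus $\sqrt{T_\delta}$; instead it writes $\rho|U|\le\int f|v|\,dvdI$, splits on $A(v,I)=\sqrt{\tfrac{1}{3+\delta}}\,|v-U|+\sqrt{\tfrac{2}{3+\delta}}\,I^{1/\delta}\lessgtr R$, and bounds the inner piece by H\"older with exponent $q$ (producing $\rho^{1-1/q}\|f\|_{L_q^\infty}^{1/q}$ times the ball volume$^{1/q}$) and the outer piece by H\"older with exponent $2$ (producing $\tfrac{1}{R}\sqrt{2(3+\delta)}\,\rho[(|U|^2+T_\delta)T_\delta]^{1/2}$). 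Optimizing over $R$ then yields the stated inequality in one stroke, with the explicit constant recorded in the lemma.

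What each route buys: the paper's approach is unified, avoids any case analysis on the macroscopic fields, and delivers the precise constant $C_{\delta,q}$ in the statement. Your approach is more elementary in that it uses no H\"older step and no optimization --- the small--$|U|$ case is an algebraic reduction to Lemma~\ref{Lemma2}, and the large--$|U|$ case fixes $R=\sqrt{2T_\delta}$ and exploits pointwise that $\{r\le R\}\subset\{|v|>|U|/2\}$ to strip off the weight directly. The price is the case split and a different (and in the small--$|U|$ branch, larger) constant; since the paper treats constants generically this is immaterial for the subsequent use of the lemma. Your closing paragraph also makes explicit why a naive interpolation between Lemmas~\ref{Lemma1} and~\ref{Lemma2} fails, which is a nice conceptual complement to the paper's more computational derivation.
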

\begin{proof}
For simplicity, we set
\[
A(v,I)=\sqrt{\frac{1}{3+\delta}}|v-U|+\sqrt{\frac{2}{3+\delta}}\,I^{\frac{1}{\delta}}.
\]
We split the macroscopic momentum as
\begin{align*}
\rho|U|
&\leq
\int_{A(v,I)\leq R}f|v|dvdI
+\int_{A(v,I)>R}f|v|dvdI \cr
&\equiv I_1+I_2.
\end{align*}
By H\"{o}lder's inequality,
\begin{align*}
I_1&\leq
\left\{\int_{A(v,I)\leq R}fdvdI\right\}^{1-\frac{1}{q}}
\left\{\int_{A(v,I)\leq R}\left(f^{\frac{1}{q}}|v|\right)^{q}dvdI\right\}^{\frac{1}{q}}
\cr
&\leq
\left\{\int_{\mathbb{R}^3\times \mathbb{R}^+}fdvdI\right\}^{1-\frac{1}{q}}
\left\{\int_{A(v,I)\leq R}f|v|^qdvdI\right\}^{\frac{1}{q}} \cr
&\leq
\rho^{1-\frac{1}{q}}\|f\|_{L_q^{\infty}}^{\frac{1}{q}}	\left\{\int_{A(v,I)\leq R}dvdI\right\}^{\frac{1}{q}}.
\end{align*}
Then, computing similarly as in the previous lemma, we have
\begin{align*}
\int_{A(v,I)\leq R}dvdI&\leq \int_{\frac{1}{3+\delta}|v-U|^2+\frac{2}{3+\delta}I^{2/\delta}\leq R^2}dvdI\leq
\left\{2^{\frac{2-\delta}{2}}\pi^2(3+\delta)^{\frac{1+\delta}{2}}\delta \right\}R^{3+\delta}.
\end{align*}
Therefore, we bound $I_1$ by
\begin{align*}
\rho^{1-\frac{1}{q}}\|f\|_{L_q^{\infty}}^{\frac{1}{q}}\left\{2^{\frac{2-\delta}{2}}\pi^2(3+\delta)^{\frac{1+\delta}{2}}\delta\right\}^{\frac{1}{q}}R^{\frac{3+\delta}{q}}.
\end{align*}
On the other hand, we observe
\begin{align*}
I_2\leq \frac{1}{R}\int_{A(v,I)>R}f|v|\left\{\sqrt{\frac{1}{3+\delta}}|v-U|+\sqrt{\frac{2}{3+\delta}}I^{\frac{1}{\delta}}\right\}dvdI.
\end{align*}
Applying H\"{o}lder's inequality again,
\begin{align*}
I_2
&\leq \frac{\sqrt{2(3+\delta)}}{R}\left\{\int_{\mathbb{R}^3\times\mathbb{R}^+}f\left(\frac{1}{3+\delta}|v|^2+\frac{2}{3+\delta}I^{\frac{2}{\delta}}\right)dvdI\right\}^{\frac{1}{2}} \cr
&\times
\left\{\int_{\mathbb{R}^3\times\mathbb{R}^+}f\left(\frac{1}{3+\delta}|v-U|^2+\frac{2}{3+\delta}I^{\frac{2}{\delta}}\right)dvdI\right\}^{\frac{1}{2}} \cr
&=
\frac{\sqrt{2(3+\delta)}}{R}\left\{\frac{1}{3+\delta}\rho|U|^2+\rho T_{\delta}\right\}^{\frac{1}{2}}\{\rho T_{\delta}\}^{\frac{1}{2}}.
\end{align*}
In conclusion,
\begin{align}\label{keep comming}
\rho |U|
\leq
\rho^{1-\frac{1}{q}}\|f\|_{L_q^{\infty}}^{\frac{1}{q}}\left\{2^{\frac{2-\delta}{2}}\pi^2(3+\delta)^{\frac{1+\delta}{2}}\delta\right\}^{\frac{1}{q}}R^{\frac{3+\delta}{q}}
+\frac{\sqrt{2(3+\delta)}}{R}\rho[(|U|^2+T_{\delta})T_{\delta}]^{\frac{1}{2}}.
\end{align}
The optimizing choice for $R$ then is
\begin{align*}
R^{3+\delta+q}=\frac{[2(3+\delta)]^{\frac{q}{2}}\rho[(|U|^2+T_{\delta})T_{\delta}]^{\frac{q}{2}}}{\left\{2^{\frac{2-\delta}{2}}\pi^2(3+\delta)^{\frac{1+\delta}{2}}\delta\right\}\|f\|_{L_q^{\infty}}},
\end{align*}
for which the right hand side of (\ref{keep comming}) becomes
\begin{align*}
2\left\{2^{\frac{2-\delta}{2}}\pi^2(3+\delta)^{\frac{1+\delta}{2}}\delta\right\}^{\frac{1}{3+\delta+q}}
\{2(3+\delta)\}^{\frac{3+\delta}{2(3+\delta+q)}}\rho^{\frac{2+\delta+q}{3+\delta+q}}[(|U|^2+T_{\delta})T_{\delta}]^{\frac{3+\delta}{2(3+\delta+q)}}\|f\|_{L_q^{\infty}}^{\frac{1}{3+\delta+q}}.
\end{align*}
This gives
\begin{align*}
\frac{\rho|U|^{3+\delta+q}}{[(|U|^2+T_{\delta})T_{\delta}]^{\frac{3+\delta}{2}}}
&\leq 2^{3+\delta+q}\{2(3+\delta)\}^{\frac{3+\delta}{2}}
\left\{2^{\frac{2-\delta}{2}}\pi^2(3+\delta)^{\frac{1+\delta}{2}}\delta\right\}\|f\|_{L_q^{\infty}} \cr
&=
2^{\frac{11+2\delta+2q}{2}}\pi^2(3+\delta)^{2+\delta}\delta\|f\|_{L_q^{\infty}}.
\end{align*}
\end{proof}
%
%
%
%
%
\section{Solution space and approximate scheme}
We set up our solution space $\Omega$:
\begin{align*}
\begin{split}
\Omega&=\big\{f\in C_+\big([0,T]; \|\cdot\|_{L^{\infty}_q}\big)~\big|~
f \mbox{ satisfies }  (\mathcal{A}1) \mbox{ and } (\mathcal{A}2)~ \big\},
\end{split}
\end{align*}
where properties ($\mathcal{A}1$) and ($\mathcal{A}2$) are
\begin{itemize}
\item ($\mathcal{A}1$): There exists a constant $C_1>0$  such that
\begin{align*}
\|f(t)\|_{L^{\infty}_q} \leq e^{C_1t}\|f_0\|_{L^{\infty}_q},\quad \mbox{for } t\in[0,T].
\end{align*}
\item ($\mathcal{A}2$): There exist positive constants $C_{T,f_0}$, $C_{T,f_0,\delta}$ and $C_{T,f_0,\delta,q}$ such that
\begin{align*}
	&(1)~\rho(x,t) \geq C_{T,f_0},\cr
	&(2)~T_{\delta}(x,t)\geq C_{T,f_0,\delta},\cr
	&(3)~\rho+|U|+T_{\delta}\leq C_{T,f_0,\delta,q}.
\end{align*}
\end{itemize}
We consider the following iteration scheme: $(n \geq 1)$
\begin{align}\label{Mildform}
\begin{split}
\partial_tf^{n+1}+v\cdot\nabla_xf^{n+1}&=A_{\nu,\theta}\big(\mathcal{M}_{\nu,\theta}(f^{n})-f^{n+1}\big),\cr
f^{n+1}(0)&=f_0.
\end{split}
\end{align}
We set $f^0=0$ and $\mathcal{M}(f^0)=0$, so that
\begin{align*}
\partial_tf^{1}+v\cdot\nabla_xf^{1}&+A_{\nu,\theta}f^{1}=0, \cr
f^{1}(0)&=f_0.
\end{align*}
Our first goal is to show that $\{f^n\}$ lies in  $\Omega$ for all $n\geq 0$.
We start with the following estimates on the polyatomic Gaussian.
\begin{proposition}\label{MconF_theta}
Suppose $f \in \Omega$, there exists a constant $C_{\mathcal{M}}$ depending on $\nu, \delta,\theta$ and $q$ such that
\begin{align*}
\|\mathcal{M}_{\nu,\theta}(f)\|_{L_{q}^{\infty}} \leq C_{\mathcal{M}}\|f\|_{L_{q}^{\infty}}.
\end{align*}
\end{proposition}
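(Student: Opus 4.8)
\noindent\emph{Proof plan.}
The plan is to estimate $\mathcal{M}_{\nu,\theta}(f)\,(1+|v|^2+I^{2/\delta})^{q/2}$ pointwise in three moves: (i) replace $\mathcal{M}_{\nu,\theta}(f)$ by an isotropic Gaussian of temperature comparable to $T_\delta$, using Lemma~\ref{Temperature}; (ii) take the supremum of that Gaussian against the weight, which collapses the estimate to controlling the scalar $\rho\,(1+|U|^2+T_\delta)^{q/2}/T_\delta^{(3+\delta)/2}$; (iii) bound this scalar by $C(\delta,q)\|f\|_{L_q^\infty}$ by splitting into three regimes in $(|U|,T_\delta)$, each handled by one of Lemmas~\ref{Lemma1}, \ref{Lemma2}, \ref{Lemma3}. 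For $f\in\Omega$ the hypotheses of Lemma~\ref{Temperature} hold: $(\mathcal{A}2)$ gives $\rho>0$ and $T_\delta>0$, and $f\geq0$ gives $T_{tr},T_{I,\delta}\geq0$, which is all that its proof uses.

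For (i): from $\mathcal{T}_{\nu,\theta}\geq\theta T_\delta\,Id$ and $T_\theta\geq\theta T_\delta$ one gets $\sqrt{\det(2\pi\mathcal{T}_{\nu,\theta})}\,(T_\theta)^{\delta/2}\geq(2\pi)^{3/2}(\theta T_\delta)^{(3+\delta)/2}$, and from the upper bounds in Lemma~\ref{Temperature} one gets $\mathcal{T}_{\nu,\theta}^{-1}\geq c_1T_\delta^{-1}Id$ and $T_\theta\leq c_2^{-1}T_\delta$ with $c_1=3/(C_\nu\{3+\delta(1-\theta)\})>0$ and $c_2=\delta/(\delta+3(1-\theta))>0$, whence
\[
\mathcal{M}_{\nu,\theta}(f)\ \leq\ \frac{\Lambda_\delta}{(2\pi)^{3/2}\theta^{(3+\delta)/2}}\cdot\frac{\rho}{T_\delta^{(3+\delta)/2}}\,\exp\left(-\frac{c_1}{2}\frac{|v-U|^2}{T_\delta}-c_2\frac{I^{2/\delta}}{T_\delta}\right).
\]
For (ii): multiplying by $(1+|v|^2+I^{2/\delta})^{q/2}$, using $|v|^2\leq2|U|^2+2|v-U|^2$, and invoking
\[
\sup_{w\in\mathbb{R}^3,\,J\geq0}e^{-\alpha|w|^2-\beta J}\big(1+|U+w|^2+J\big)^{q/2}\ \leq\ C(q)\big(1+|U|^2+\alpha^{-1}+\beta^{-1}\big)^{q/2},\qquad\alpha,\beta>0,
\]
which follows from $(a+b+c)^{q/2}\leq3^{q/2}(a^{q/2}+b^{q/2}+c^{q/2})$ and $\sup_{s\geq0}s^{q/2}e^{-s}<\infty$, with $\alpha\sim c_1/T_\delta$ and $\beta\sim c_2/T_\delta$, gives
\[
\|\mathcal{M}_{\nu,\theta}(f)\|_{L_q^\infty}\ \leq\ C(\nu,\delta,\theta,q)\,\frac{\rho\,(1+|U|^2+T_\delta)^{q/2}}{T_\delta^{(3+\delta)/2}}.
\]

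For (iii), split into: (a) $|U|^2+T_\delta\leq1$; (b) $|U|^2+T_\delta>1$ with $|U|^2\leq T_\delta$; (c) $|U|^2+T_\delta>1$ with $|U|^2>T_\delta$. In (a) the bracket is $\leq2^{q/2}$ and Lemma~\ref{Lemma1} gives $\rho/T_\delta^{(3+\delta)/2}\leq C_\delta\|f\|_{L_q^\infty}$. In (b), $1+|U|^2+T_\delta<4T_\delta$, so the right-hand side above is at most $C\,\rho\,T_\delta^{(q-\delta-3)/2}\leq C\,\rho\,(T_\delta+|U|^2)^{(q-\delta-3)/2}$ (the exponent is positive since $q>5+\delta$), which is $\leq C\|f\|_{L_q^\infty}$ by Lemma~\ref{Lemma2}. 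In (c), $1+|U|^2+T_\delta<4|U|^2$ and $T_\delta+|U|^2<2|U|^2$, so the right-hand side is at most $C\,\rho\,|U|^q/T_\delta^{(3+\delta)/2}\leq C\,\rho\,|U|^{3+\delta+q}\big/\big([(T_\delta+|U|^2)T_\delta]^{(3+\delta)/2}\big)$, which is $\leq C\|f\|_{L_q^\infty}$ by Lemma~\ref{Lemma3}. Assembling (i)--(iii) proves the proposition with $C_{\mathcal{M}}=C_{\mathcal{M}}(\nu,\delta,\theta,q)$.

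The one point requiring care, and the reason all three macroscopic estimates are needed, is the sharpness in (ii): the supremum against the weight must be taken with bracket exponent exactly $q/2$. If instead one bounded $1+|v|^2+I^{2/\delta}$ by a product and paid a power $q$ (e.g.\ splitting off a factor $(1+|U|^2)^{q/2}$ crudely), the resulting scalar could not be closed by Lemmas~\ref{Lemma1}--\ref{Lemma3}; with the sharp exponent it is matched exactly, one regime apiece.
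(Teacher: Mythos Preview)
Your proof is correct and follows essentially the same approach as the paper: both use Lemma~\ref{Temperature} to dominate $\mathcal{M}_{\nu,\theta}(f)$ by an isotropic Gaussian in $T_\delta$, absorb the $|v-U|$ and $I^{2/\delta}$ parts of the weight via $\sup_{x\geq0}x^{q/2}e^{-x}<\infty$, and then close on the remaining scalar by the same three-regime split (Lemma~\ref{Lemma1} when $|U|^2+T_\delta$ is small, Lemma~\ref{Lemma2} when $T_\delta$ dominates, Lemma~\ref{Lemma3} when $|U|$ dominates). The only difference is organizational---the paper decomposes the weight additively into the pieces $1$, $|U|^q$, $|v-U|^q$, $I^{q/\delta}$ and treats each separately, whereas you first take the supremum to collapse everything to the single scalar $\rho(1+|U|^2+T_\delta)^{q/2}/T_\delta^{(3+\delta)/2}$ and only then split; your packaging is slightly more streamlined but the content is identical.
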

\begin{remark}
$C_{\mathcal{M}}$ blows up as $\theta$ tends to 0. See the end of the proof.
\end{remark}
\begin{proof}
We will show that $\mathcal{M}_{\nu,\theta}(f)$, $|v|^{q}\mathcal{M}_{\nu,\theta}(f)$ and $I^{\frac{q}{\delta}}\mathcal{M}_{\nu,\theta}(f) $  are controlled by $\|f\|_{L_q^{\infty}}$. \newline

\noindent(a) {\bf The estimate for $\mathcal{M}_{\nu,\theta}(f)$ :}
We first recall Lemma \ref{Temperature} to observe
\begin{align}\label{less0}
\frac{1}{2}(v-U)^{\top}\mathcal{T}_{\nu,\theta}^{-1}(v-U)+\frac{I^{\frac{2}{\delta}}}{T_{\theta}}\geq
\frac{3}{2C_{\nu}\big\{3+\delta(1-\theta)\big\}}\frac{|v-U|^2}{T_{\delta}}+\frac{I^{\frac{2}{\delta}}}{T_{\theta}}\geq 0
\end{align}
for $f\in\Omega$.
Hence we have
\begin{align}\label{less1}
\exp\left(-\frac{1}{2}(v-U)^{\top}\mathcal{T}_{\nu,\theta}^{-1}(v-U)-\frac{I^{\frac{2}{\delta}}}{T_{\theta}}\right) \leq 1.
\end{align}
Using this and Lemma \ref{Temperature} and Lemma \ref{Lemma1}, we have
\begin{align*}
\mathcal{M}_{\nu,\theta}(f)
&\leq \frac{\rho \Lambda_{\delta}}{\sqrt{\det (2\pi \mathcal{T}_{\nu,\theta})} (T_{\theta})^{\frac{\delta}{2}}}\cr
&\leq
\frac{1}{(2\pi)^{3/2}}\frac{1}{\theta^{\frac{3+\delta}{2}}}\frac{\rho}{T_{\delta}^{\frac{3+\delta}{2}}} \cr
&\leq
\frac{1}{(2\pi)^{3/2}}\frac{1}{\theta^{\frac{3+\delta}{2}}}\left\{2^{\frac{7}{2}}(3+\delta)^{\frac{1+\delta}{2}}\pi^2\delta\right\}\|f\|_{L_{q}^{\infty}}\cr
&\equiv \frac{C_0}{\theta^{\frac{3+\delta}{2}}}\|f\|_{L_q^{\infty}}
.
\end{align*}

\noindent$(b)$ {\bf The estimate for $\mathcal{M}_{\nu,\theta}(f)|v|^{q}$:} We divide it into the estimates of $|U|^{q}\mathcal{M}_{\nu,\theta}(f)$
and $|v-U|^{q}\mathcal{M}_{\nu,\theta}(f)$.\newline

\noindent$(b_1)$ $|U|^{q}\mathcal{M}_{\nu,\theta}(f)$: We use (\ref{less1}) and Lemma \ref{Temperature} to compute
\begin{align*}
|U|^q\mathcal{M}_{\nu,\theta}(f)\leq |U|^q\frac{\rho \Lambda_{\delta}}{\sqrt{\det (2\pi \mathcal{T}_{\nu,\theta})} (T_{\theta})^{\frac{\delta}{2}}}
\leq
\frac{1}{(2\pi)^{3/2}}\frac{1}{\theta^{\frac{3+\delta}{2}}}|U|^q \frac{\rho}{T_{\delta}^{\frac{3+\delta}{2}}}.
\end{align*}
We divide this estimate into two cases. In the case of $|U|<T_{\delta}^{\frac{1}{2}}$, we have from Lemma \ref{Lemma2} that
\begin{align*}
|U|^q\frac{\rho}{T_{\delta}^{\frac{3+\delta}{2}}}
\leq
\rho (T_{\delta}+|U|^2)^{\frac{q-3-\delta}{2}}
\leq \bigg\{\frac{2^{\frac{q-2\delta-1}{2}}\pi^2(3+\delta)^{\frac{q}{2}}\delta}{q-\delta-5}\bigg\}\|f\|_{L_q^{\infty}}.
\end{align*}
On the other hand, in the case of $|U|\geq T_{\delta}^{\frac{1}{2}}$, we have from Lemma \ref{Lemma3} that
\begin{align*}
|U|^q\frac{\rho}{T_{\delta}^{\frac{3+\delta}{2}}}
\leq \frac{\rho|U|^{q+3+\delta}}{|U|^{3+\delta}T_{\delta}^{\frac{3+\delta}{2}}}
\leq
2^{\frac{3+\delta}{2}}\frac{\rho|U|^{q+3+\delta}}{[(T_{\delta}+|U|^2)T_{\delta}]^{\frac{3+\delta}{2}}} \leq 2^{\frac{14+3\delta+2q}{2}}\pi^2(3+\delta)^{2+\delta}\delta\|f\|_{L_q^{\infty}}.
\end{align*}
These two estimates give
\begin{align*}
|U|^q \mathcal{M}_{\nu,\theta}(f)\leq
\frac{C_1}{\theta^{\frac{3+\delta}{2}}}\|f\|_{L_q^{\infty}}.
\end{align*}
for
\[
C_1=\bigg\{\frac{2^{\frac{q-2\delta-4}{2}}\sqrt{\pi}(3+\delta)^{\frac{q}{2}}\delta}{q-\delta-5}\bigg\}
+2^{\frac{11+3\delta+2q}{2}}\sqrt{\pi}(3+\delta)^{2+\delta}\delta.
\]
$(b_2)$  {\bf $|v-U|^q\mathcal{M}_{\nu,\theta}(f)$:}
From (\ref{less0}) and Lemma \ref{Temperature}, we have
\begin{align*}
&|v-U|^{q} \mathcal{M}_{\nu,\theta}(f)\cr
&\quad\leq
\frac{1}{(2\pi)^{3/2}}\frac{1}{\theta^{\frac{3+\delta}{2}}}|v-U|^q\frac{\rho}{T_{\delta}^{\frac{3+\delta}{2}}}
\exp\Bigg(-\frac{3}{2C_{\nu}\big\{3+\delta(1-\theta)\big\}}\frac{|v-U|^2}{T_{\delta}}\Bigg) \cr
&\quad=\frac{1}{(2\pi)^{3/2}}
\frac{1}{\theta^{\frac{3+\delta}{2}}}T_{\delta}^{\frac{q}{2}}\frac{\rho}{T_{\delta}^{\frac{3+\delta}{2}}}\left(\frac{|v-U|^2}{T_{\delta}} \right)^{\frac{q}{2}}
\exp\Bigg(-\frac{3}{2C_{\nu}\big\{3+\delta(1-\theta)\big\}}\frac{|v-U|^2}{T_{\delta}}\Bigg) \cr
&\quad\equiv \frac{C_2}{\theta^{\frac{3+\delta}{2}}}\rho T_{\delta}^{\frac{q-3-\delta}{2}},
\end{align*}
where
\[
C_2=\frac{1}{(2\pi)^{3/2}} \sup_{x\geq0}\big(x^{q/2}e^{-x}\big)\left\{\frac{2C_{\nu}(3+\delta(1-\theta))}{3}\right\}^{q/2}.
\]
This, combined with Lemma \ref{Lemma2} implies
\begin{align*}
|v-U|^{q} \mathcal{M}_{\nu,\theta}(f) &\leq \frac{C_2}{\theta^{\frac{3+\delta}{2}}}\rho (T_{\delta}+|U|^2)^{\frac{q-3-\delta}{2}} \cr
&\leq
\frac{C_2}{\theta^{\frac{3+\delta}{2}}}
\left\{\frac{2^{\frac{q-2\delta-1}{2}}\pi^2(3+\delta)^{\frac{q}{2}}\delta}{q-\delta-5}\right\}\|f\|_{L_q^{\infty}}\cr
&\equiv
\frac{C_3}{\theta^{\frac{3+\delta}{2}}}
\|f\|_{L_q^{\infty}}.
\end{align*}
(c) {\bf The estimate for $I^{\frac{q}{\delta}}\mathcal{M}_{\nu,\theta}(f) $:} Again from (\ref{less0}), we have
\begin{align*}
\frac{1}{2}(v-U)^{\top}\mathcal{T}_{\nu,\theta}^{-1}(v-U)+\frac{I^{\frac{2}{\delta}}}{T_{\theta}}\geq
\frac{\delta}{\delta+3(1-\theta)}\frac{I^{\frac{2}{\delta}}}{T_{\delta}},
\end{align*}
so that $I^{\frac{q}{\delta}}\mathcal{M}_{\nu,\theta}(f)$ is estimated as follows:
\begin{align*}
I^{\frac{q}{\delta}}\mathcal{M}_{\nu,\theta}(f)
&\leq
\frac{1}{\sqrt{(2\pi)^3}}I^{\frac{q}{\delta}}\frac{1}{\theta^{\frac{3+\delta}{2}}}\frac{\rho}{T_{\delta}^{\frac{3+\delta}{2}}}\exp\bigg(-\frac{\delta}{\delta+3(1-\theta)}\frac{I^{\frac{2}{\delta}}}{T_{\delta}}\bigg)
\cr
&=
\frac{1}{\sqrt{(2\pi)^3}}\frac{1}{\theta^{\frac{3+\delta}{2}}}T_{\delta}^{\frac{q}{2}}\frac{\rho}{T_{\delta}^{\frac{3+\delta}{2}}}\left(\frac{I^{\frac{2}{\delta}}}{T_{\delta}} \right)^{\frac{q}{2}}
\exp\bigg(-\frac{\delta}{\delta+3(1-\theta)}\frac{I^{\frac{2}{\delta}}}{T_{\delta}}\bigg)
\cr
&\equiv \frac{C_4}{\theta^{\frac{3+\delta}{2}}}\rho T_{\delta}^{\frac{q-3-\delta}{2}},
\end{align*}
where
\[
C_4=\frac{1}{(2\pi)^{3/2}}\sup_{x\geq0}(x^{q/2}e^{-x})\left(\frac{\delta+3(1-\theta)}{\delta}\right)^{q/2}.
\]
Then, in view of Lemma \ref{Lemma2}, we derive
\begin{align*}
I^{\frac{q}{\delta}}\mathcal{M}_{\nu,\theta}(f)
&\leq \frac{C_4}{\theta^{\frac{3+\delta}{2}}}\rho (T_{\delta}+|U|^2)^{\frac{q-3-\delta}{2}} \cr
&\leq
\frac{C_4}{\theta^{\frac{3+\delta}{2}}}\left\{\frac{2^{\frac{q-2\delta-1}{2}}\pi^2(3+\delta)^{\frac{q}{2}}\delta}{q-\delta-5}\right\}\|f\|_{L_q^{\infty}}\cr
&\equiv
\frac{C_5}{\theta^{\frac{3+\delta}{2}}}\|f\|_{L_q^{\infty}}.
\end{align*}
Finally, we combine  $(a)$, $(b)$ and $(c)$ to conclude that
\begin{align*}
\|\mathcal{M}_{\nu,\theta}(f)\|_{L_{q}^{\infty}} \leq \frac{C_{\nu,\delta,\theta,q}}{\theta^{\frac{3+\delta}{2}}} \|f\|_{L_{q}^{\infty}},
\end{align*}
where
\[
C_{\nu,\delta,\theta,q}=C_0+C_1+C_3+C_5.
\]
Note that $\displaystyle\max_{0\leq \theta\leq 1}C_{\nu,\delta,\theta,q}<\infty$.
\end{proof}
\begin{proposition}\label{prop2.2}
$f^n$ lies in $\Omega$ for all $n>0$.
That is, $f^n$ satisfies
\begin{itemize}
\item $(\mathcal{A}1)$: $f^n$ is uniformly bounded in $\|\cdot\|_{L^{\infty}_q}$
\begin{align*}
\|f^n\|_{L^{\infty}_q} \leq e^{C_{1}t}\|f_0\|_{L^{\infty}_q},
\end{align*}
where $C_1=A_{\nu,\theta}\left(C_{\mathcal{M}}-1\right)$.
\item $(\mathcal{A}2)$:  There exist positive constants $C_{T,f_0}$, $C_{T,f_0,\delta}$ and $C_{T,f_0,\delta,q}$ such that
\begin{align*}
	&(1)~\rho^n(x,t) \geq C_{T,f_0},\cr
	&(2)~T_{\delta}^n(x,t)\geq C_{T,f_0,\delta},\cr
	&(3)~\rho^n+|U^n|+T_{\delta}^n\leq C_{T,f_0,\delta,q}.
\end{align*}
\end{itemize}
\end{proposition}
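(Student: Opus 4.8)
The natural approach is an induction on $n\ge1$ that establishes $(\mathcal{A}1)$ and $(\mathcal{A}2)$ \emph{simultaneously} at each step; the two properties cannot be decoupled, since the bound $(\mathcal{A}1)$ for $f^{n+1}$ rests on Proposition~\ref{MconF_theta} applied to $f^{n}$, which presupposes $f^{n}\in\Omega$, while the density lower bound for $f^{n+1}$ in $(\mathcal{A}2)$ rests on the nonnegativity of $\mathcal{M}_{\nu,\theta}(f^{n})$, which by Lemma~\ref{Temperature} is guaranteed only once $\rho^{n},T^{n}_{\delta}>0$, i.e.\ only once $(\mathcal{A}2)$ is known for $f^{n}$. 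All constants produced below will depend on $T,f_{0},\delta,q,\nu,\theta$ only, never on $n$, which is exactly the uniformity needed for the convergence argument in Section~6.

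For the base case $n=1$: since $\mathcal{M}_{\nu,\theta}(f^{0})=0$, the first iterate is explicitly $f^{1}(t,x,v,I)=e^{-A_{\nu,\theta}t}f_{0}(x-vt,v,I)$, and since translation on $\mathbb{T}^{3}$ is measure preserving, $\|f^{1}(t)\|_{L^{\infty}_{q}}=e^{-A_{\nu,\theta}t}\|f_{0}\|_{L^{\infty}_{q}}\le e^{C_{1}t}\|f_{0}\|_{L^{\infty}_{q}}$, giving $(\mathcal{A}1)$. Integrating in $(v,I)$ and using the hypothesis on $f_{0}$ gives $\rho^{1}(t,x)\ge e^{-A_{\nu,\theta}T}C_{l}$; the upper bound $\rho^{1}\le C\|f^{1}\|_{L^{\infty}_{q}}\le CC_{u}$ follows by pulling $\|f^{1}\|_{L^{\infty}_{q}}$ out and integrating the weight $(1+|v|^{2}+I^{2/\delta})^{-q/2}$, which is finite because $q>5+\delta>\delta+3$; Lemma~\ref{Lemma2} together with the lower bound on $\rho^{1}$ then bounds $T^{1}_{\delta}$ and $|U^{1}|$ from above, and Lemma~\ref{Lemma1} bounds $T^{1}_{\delta}$ from below. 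Hence $f^{1}\in\Omega$.

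For the inductive step, assume $f^{n}\in\Omega$. Because $\rho^{n},T^{n}_{\delta}>0$, Lemma~\ref{Temperature} makes $\mathcal{T}^{n}_{\nu,\theta}$ symmetric positive definite and $T^{n}_{\theta}\ge\theta T^{n}_{\delta}>0$, so $\mathcal{M}_{\nu,\theta}(f^{n})\ge0$ is a legitimate density. From the Duhamel form of \eqref{Mildform},
\[
f^{n+1}(t,x,v,I)=e^{-A_{\nu,\theta}t}f_{0}(x-vt,v,I)+A_{\nu,\theta}\int_{0}^{t}e^{-A_{\nu,\theta}(t-s)}\mathcal{M}_{\nu,\theta}(f^{n})\big(x-(t-s)v,v,s,I\big)\,ds,
\]
the integral term is nonnegative, so $f^{n+1}\ge e^{-A_{\nu,\theta}t}f_{0}(x-vt,v,I)\ge0$ and therefore $\rho^{n+1}\ge e^{-A_{\nu,\theta}T}C_{l}=:C_{T,f_{0}}$, which is $(\mathcal{A}2)(1)$. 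For $(\mathcal{A}1)$, multiply the Duhamel identity by $(1+|v|^{2}+I^{2/\delta})^{q/2}$, take the essential supremum, and invoke Proposition~\ref{MconF_theta} with the induction hypothesis to obtain
\[
\|f^{n+1}(t)\|_{L^{\infty}_{q}}\le e^{-A_{\nu,\theta}t}\|f_{0}\|_{L^{\infty}_{q}}+A_{\nu,\theta}C_{\mathcal{M}}\|f_{0}\|_{L^{\infty}_{q}}\int_{0}^{t}e^{-A_{\nu,\theta}(t-s)}e^{C_{1}s}\,ds.
\]
The choice $C_{1}=A_{\nu,\theta}(C_{\mathcal{M}}-1)$ is precisely what makes $A_{\nu,\theta}+C_{1}=A_{\nu,\theta}C_{\mathcal{M}}$, so the Duhamel integral equals $\big(e^{C_{1}t}-e^{-A_{\nu,\theta}t}\big)/(A_{\nu,\theta}C_{\mathcal{M}})$ and the right-hand side collapses to exactly $e^{C_{1}t}\|f_{0}\|_{L^{\infty}_{q}}$. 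The remaining parts of $(\mathcal{A}2)$ follow as in the base case: $\rho^{n+1}\le Ce^{C_{1}T}C_{u}$ from the weight integral, Lemma~\ref{Lemma2} combined with $\rho^{n+1}\ge C_{T,f_{0}}$ bounds $T^{n+1}_{\delta}$ and $|U^{n+1}|$ from above, and Lemma~\ref{Lemma1} with the same lower bound on $\rho^{n+1}$ bounds $T^{n+1}_{\delta}$ from below; the continuity in $t$ required for membership in $C_{+}([0,T];\|\cdot\|_{L^{\infty}_{q}})$ is inherited from the Duhamel representation. This closes the induction.

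I do not expect a single genuinely hard estimate here — each one reduces to Lemmas~\ref{Temperature}--\ref{Lemma3} or to Proposition~\ref{MconF_theta}. The one point that needs care is the coupled bookkeeping just described: one must verify that membership $f^{n}\in\Omega$ supplies \emph{exactly} the hypotheses ($\rho^{n},T^{n}_{\delta}>0$ and finiteness of $\|f^{n}\|_{L^{\infty}_{q}}$) that legitimize $\mathcal{M}_{\nu,\theta}(f^{n})$ as a nonnegative Gaussian and that license Proposition~\ref{MconF_theta}, and that the constant $C_{1}$ is tuned so the Duhamel estimate reproduces itself rather than degrading from one iterate to the next. That $n$-uniformity of all constants is the real payload of the proposition.
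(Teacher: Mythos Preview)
Your proposal is correct and follows essentially the same route as the paper: induction via the Duhamel (mild) form, Proposition~\ref{MconF_theta} for $(\mathcal{A}1)$ with the exact tuning $C_{1}=A_{\nu,\theta}(C_{\mathcal{M}}-1)$ so that the bound reproduces itself, nonnegativity of $\mathcal{M}_{\nu,\theta}(f^{n})$ for the density lower bound, and Lemmas~\ref{Lemma1}--\ref{Lemma2} for the remaining parts of $(\mathcal{A}2)$. Your explicit treatment of the base case $n=1$ is in fact cleaner than the paper's, which says the properties are ``trivially satisfied for $n=0$'' even though $f^{0}=0$ violates $(\mathcal{A}2)$; the paper's induction step still works because $\mathcal{M}_{\nu,\theta}(f^{0})=0$ by convention, but your version avoids that wrinkle.
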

\begin{proof}
We proceed by induction. The properties are trivially satisfied for $n=0$. Assume that $f^n\in\Omega$.  We prove that $f^{n+1}$ also satisfies$(\mathcal{A}1)$ and $(\mathcal{A}2)$.\newline

\noindent $(\mathcal{A}1)$ We write (\ref{Mildform}) in the mild form:
\begin{align*}
f^{n+1}(t,x,v,I)=e^{-A_{\nu,\theta}t}f_0(x-vt,v,I)+A_{\nu,\theta}\int_0^te^{-A_{\nu,\theta}(t-s)}\mathcal{M}_{\nu,\theta}(f^n)(x-(t-s)v,v,s,I)ds
\end{align*}
and take $\|\cdot\|_{L^{\infty}_q}$ on both sides,
\begin{align}\label{mildform}
\|f^{n+1}(t)\|_{L^{\infty}_q}
\leq
e^{-A_{\nu,\theta}t}\|f_0\|_{L^{\infty}_q}+A_{\nu,\theta}\int_0^t e^{-A_{\nu,\theta}(t-s)}\|\mathcal{M}_{\nu,\theta}(f^n)(s)\|_{L^{\infty}_q}ds.
\end{align}
Since $f^n\in \Omega$, we can apply  Proposition \ref{MconF_theta} to estimate
\begin{align*}
\begin{split}
A_{\nu,\theta}\int_0^t e^{-A_{\nu,\theta}(t-s)}\|\mathcal{M}_{\nu,\theta}(f^n)(s)\|_{L^{\infty}_q}ds
&\leq A_{\nu,\theta}\int_0^t e^{-A_{\nu,\theta}(t-s)}C_{\mathcal{M}}\|f^n(s)\|_{L^{\infty}_q}ds \cr
&\leq A_{\nu,\theta}\int_0^t e^{-A_{\nu,\theta}(t-s)}C_{\mathcal{M}}e^{C_1s}\|f_0\|_{L^{\infty}_q}ds \cr
&= \frac{A_{\nu,\theta}C_{\mathcal{M}}}{C_1+A_{\nu,\theta}}(e^{C_1t}-e^{-A_{\nu,\theta}t})\|f_0\|_{L^{\infty}_q},
\end{split}
\end{align*}
where we used $\|f^n\|_{L^{\infty}_q} \leq e^{C_{1}t}\|f_0\|_{L^{\infty}_q}$.
Plugging this estimate into (\ref{mildform}), we get
\begin{align*}
\|f^{n+1}(t)\|_{L^{\infty}_q} \leq e^{C_1t}\|f_0\|_{L^{\infty}_q},
\end{align*}
since $(A_{\nu,\theta}C_{\mathcal{M}})/(C_1+A_{\nu,\theta})=1$.\newline

\noindent $(\mathcal{A}2)$ By the nonnegativity of polyatomic Gaussian $\mathcal{M}_{\nu,\theta}(f^n)$, we have
from the above mild form
\[
f^{n+1} \geq e^{-A_{\nu,\theta}t}f_0(x-vt,v,I).
\]
Integrating in $v$ and $I$ on both sides, and recalling the lower bound assumption imposed on $f_0$,
\begin{align*}
\rho^{n+1}=\int_{\mathbb{R}^3 \times \mathbb{R}^+}f^{n+1} dvdI \geq e^{-A_{\nu,\theta}t}\int_{\mathbb{R}^3 \times \mathbb{R}^+}
f_0(x-vt,v,I) dvdI \geq C_{f_0}e^{-A_{\nu,\theta}t}.
\end{align*}
Hence, combining the above results and Lemma \ref{Lemma1} gives
\begin{align*}
C_{f_0}e^{-A_{\nu,\theta}t}\leq\rho^{n+1} \leq
C_{\delta}\|f^{n+1}\|_{L_q^{\infty}}\big\{T_{\delta}^{n+1}\big\}^{\frac{3+\delta}{2}} \leq C_{\delta}e^{C_1t}\|f_0\|_{L_q^{\infty}}\big\{T_{\delta}^{n+1}\big\}^{\frac{3+\delta}{2}}.
\end{align*}
Therefore,
\begin{align*}
	T^{n+1}_{\delta} \geq \Bigg(\frac{C_{f_0}e^{-A_{\nu,\theta}t}}{C_{\delta}e^{C_1t}\|f_0\|_{L_q^{\infty}}}\Bigg)^{\frac{2}{3+\delta}}\geq C_{T,f_0,\delta}.
\end{align*}
The estimate $(\mathcal{A}2)$ $(3)$ follows immediately from the above lower bound for $\rho^{n+1}$ and Lemma \ref{Lemma2}.
This completes the proof.
\end{proof}

\section{Lipschitz continuity of $\mathcal{M}_{\nu,\theta}$}
\begin{proposition}\label{Lip_int}
Let $f$ and $g$ lie in $\Omega$. Then $\mathcal{M}_{\nu,\theta}$ satisfies the following continuity property:
\begin{align*}
\|\mathcal{M}_{\nu,\theta}(f)-\mathcal{M}_{\nu,\theta}(g)\|_{L_q^{\infty}} \leq C_{Lip}\|f-g\|_{L_q^{\infty}}
\end{align*}
for some constant $C_{Lip}$ depending on $T, \delta, \theta, q$ and $f_0$.
\end{proposition}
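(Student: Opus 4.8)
The plan is to factor the map $f\mapsto\mathcal{M}_{\nu,\theta}(f)$ through the finite list of macroscopic parameters $(\rho,U,\mathcal{T}_{\nu,\theta},T_\theta)$, proving two separate Lipschitz statements and composing them: first, that each parameter depends Lipschitz-continuously on $f$ in $\|\cdot\|_{L^\infty_q}$ whenever $f\in\Omega$; second, that $\mathcal{M}_{\nu,\theta}$, viewed as an $(L^\infty_q,\|\cdot\|_{L^\infty_q})$-valued function of those parameters, is Lipschitz on the compact parameter region that membership in $\Omega$ forces.

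\emph{Step 1: moments are Lipschitz in $f$.} For any weight $m(v,I)$ with $|m(v,I)|\le C(1+|v|^2+I^{2/\delta})$ one has, using $|f-g|\le\|f-g\|_{L^\infty_q}(1+|v|^2+I^{2/\delta})^{-q/2}$ together with the polar-type change of variables of Lemma \ref{Lemma1} (under which $dvdI$ picks up an $r^{\delta+2}$ factor and $1+|v|^2+I^{2/\delta}\asymp 1+r^2$),
\[
\Big|\int_{\mathbb{R}^3\times\mathbb{R}^+} m(v,I)\,(f-g)\,dvdI\Big|\le C\|f-g\|_{L^\infty_q},
\]
the integral converging precisely because $q>5+\delta$. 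This applies to $\rho$, $\rho U$, $\int fv\otimes v\,dvdI$, $E_{tr}$ and $E_{I,\delta}$ (equivalently to $\rho(T_\delta+\tfrac{1}{3+\delta}|U|^2)$ etc.). Since $f,g\in\Omega$ provide $\rho\ge C_{T,f_0}>0$ and $\rho,|U|,T_\delta\le C_{T,f_0,\delta,q}$, each of $U$, $\Theta$, $T_{tr}$, $T_{I,\delta}$, $T_\delta$, $T_\theta$, and hence $\mathcal{T}_{\nu,\theta}$, is obtained as a product/ratio of these moments with $1/\rho$ (which is bounded), so all of them are Lipschitz functions of $f$ in $\|\cdot\|_{L^\infty_q}$, with constants depending only on $T,\delta,\theta,q,f_0$ (and $\nu$).

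\emph{Step 2: $\mathcal{M}_{\nu,\theta}$ is Lipschitz in the parameters.} By Lemma \ref{Temperature} and the bounds in $(\mathcal{A}2)$, for $f\in\Omega$ the parameters lie in a convex region $\mathcal{R}$ on which $\rho$ is bounded above and below, $|U|$ is bounded, $\mathcal{T}_{\nu,\theta}$ satisfies $c_1 Id\le\mathcal{T}_{\nu,\theta}\le c_2 Id$ with $c_1=\theta C_{T,f_0,\delta}>0$, and $T_\theta\in[\theta C_{T,f_0,\delta},\,c_3]$; convexity holds because the eigenvalue constraints on the symmetric matrix $\mathcal{T}_{\nu,\theta}$ cut out a convex set. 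Differentiating the explicit formula for $\mathcal{M}_{\nu,\theta}$ gives $\partial_\rho\mathcal{M}_{\nu,\theta}=\mathcal{M}_{\nu,\theta}/\rho$, $\partial_{U_i}\mathcal{M}_{\nu,\theta}=\mathcal{M}_{\nu,\theta}\,[\mathcal{T}_{\nu,\theta}^{-1}(v-U)]_i$, $\partial_{\mathcal{T}_{ij}}\mathcal{M}_{\nu,\theta}=\mathcal{M}_{\nu,\theta}\times$(bounded combination of entries of $\mathcal{T}_{\nu,\theta}^{-1}$, using Jacobi's formula and $\partial_{\mathcal{T}}\mathcal{T}^{-1}=-\mathcal{T}^{-1}(\cdot)\mathcal{T}^{-1}$, together with a factor at most quadratic in $v-U$), and $\partial_{T_\theta}\mathcal{M}_{\nu,\theta}=\mathcal{M}_{\nu,\theta}\times O(T_\theta^{-1}+I^{2/\delta}T_\theta^{-2})$. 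On $\mathcal{R}$ the prefactors $1/\rho$, $\|\mathcal{T}_{\nu,\theta}^{-1}\|\le 1/c_1$, $1/T_\theta$, $|U|$ are all bounded, and the extra polynomial factors of degree $\le 2$ in $v-U$ or $I^{2/\delta}$ merely raise the polynomial degree in the polynomial-times-Gaussian estimates already carried out in the proof of Proposition \ref{MconF_theta}; they are still absorbed by the Gaussian decay $\exp\!\big(-\tfrac{3}{2C_\nu\{3+\delta(1-\theta)\}}\tfrac{|v-U|^2}{T_\delta}-\tfrac{\delta}{\delta+3(1-\theta)}\tfrac{I^{2/\delta}}{T_\delta}\big)$, since $\sup_{x\ge 0}x^m e^{-x}<\infty$ for all $m$. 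Hence $\sup_{v,I}(1+|v|^2+I^{2/\delta})^{q/2}\,|\partial_p\mathcal{M}_{\nu,\theta}|\le C$ uniformly over $\mathcal{R}$ for each parameter $p$, and the mean value inequality along segments in the convex set $\mathcal{R}$ (for $L^\infty_q$-valued maps) yields
\[
\|\mathcal{M}_{\nu,\theta}(p)-\mathcal{M}_{\nu,\theta}(p')\|_{L^\infty_q}\le C\big(|\rho-\rho'|+|U-U'|+|\mathcal{T}_{\nu,\theta}-\mathcal{T}_{\nu,\theta}'|+|T_\theta-T_\theta'|\big).
\]

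\emph{Step 3 and the main obstacle.} Evaluating Step 2 at the parameter values of $f$ and $g$ and inserting the bounds of Step 1 gives $\|\mathcal{M}_{\nu,\theta}(f)-\mathcal{M}_{\nu,\theta}(g)\|_{L^\infty_q}\le C_{Lip}\|f-g\|_{L^\infty_q}$ with $C_{Lip}$ depending on $T,\delta,\theta,q,f_0$ (and $\nu$). The only genuinely delicate points are organizational: verifying that the region $\mathcal{R}$ produced by $\Omega$ is simultaneously compact — i.e.\ bounded away from every degeneracy $\rho\to 0$, $\mathcal{T}_{\nu,\theta}$ singular, $T_\theta\to 0$, which is exactly where $\theta>0$ and the lower bounds in $(\mathcal{A}2)$ enter — and convex, so the mean value estimate applies; and checking that the one or two extra polynomial powers generated by the $\mathcal{T}_{\nu,\theta}^{-1}$- and $T_\theta$-derivatives remain controlled by a bound of exactly the type in the proof of Proposition \ref{MconF_theta}. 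Everything beyond that is routine chain-rule differentiation of the explicit Gaussian and the elementary inequality $\sup_{x\ge0}x^m e^{-x}<\infty$.
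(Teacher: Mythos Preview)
Your proposal is correct and follows essentially the same strategy as the paper: both arguments factor $\mathcal{M}_{\nu,\theta}$ through the finite-dimensional macroscopic parameters, bound the parameter differences by $\|f-g\|_{L^\infty_q}$ using the lower bound on $\rho$ from $(\mathcal{A}2)$, and control the derivatives of the Gaussian along the segment joining the two parameter tuples via the lower bounds on $\mathcal{T}_{\nu,\theta}$ and $T_\theta$ coming from Lemma \ref{Temperature} with $\theta>0$. The only cosmetic differences are that the paper writes out the interpolation explicitly via a parameter $\eta\in[0,1]$ (your ``mean value inequality on the convex set $\mathcal{R}$'' made concrete) and chooses $(\rho,U,\mathcal{T}_{\nu,\theta},T_{I,\delta})$ rather than $(\rho,U,\mathcal{T}_{\nu,\theta},T_\theta)$ as independent coordinates; neither affects the substance of the argument.
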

\begin{proof}
For the proof of this proposition, we set the transitonal macroscopic fields $\rho_{\eta}, U_{\eta}, \mathcal{T}_{\nu,\theta\eta}$, $T_{I,\delta \eta}$:
\begin{align*}
(\rho_{\eta}, U_{\eta}, \mathcal{T}_{\nu,\theta\eta}, T_{I,\delta {\eta}}) ={\eta}(\rho_f, U_f, \mathcal{T}_{\nu,\theta f}, T_{I,\delta f})+(1-{\eta})(\rho_g, U_g, \mathcal{T}_{\nu,\theta g}, T_{I,\delta g})
\end{align*}
and define the transitional polyatomic Gaussian:
\begin{align*}
\mathcal{M}_{\nu,\theta}({\eta})&=\frac{\rho_{\eta}\Lambda_{\delta}}{\sqrt{\det(2\pi\mathcal{T}_{\nu,\theta \eta})} (T_{\theta {\eta}})^\frac{\delta}{2}}\exp\bigg(-\frac{1}{2}(v-U_{\eta})^{\top}\mathcal{T}_{\nu,\theta\eta}^{-1}(v-U_{\eta})-\frac{I^{\frac{2}{\delta}}}{T_{\theta {\eta}}}\bigg).
\end{align*}

Applying Taylor's theorem, we expand
\begin{align}\label{Msplit}
\begin{split}
\mathcal{M}_{\nu,\theta}(f)-\mathcal{M}_{\nu,\theta}(g)
&=(\rho_f-\rho_g)\int_{0}^{1}\frac{\partial\mathcal{M}_{\nu,\theta}({\eta}) }{\partial\rho_{\eta}}d{\eta} \cr
&+ (U_f-U_g)\int_{0}^{1}\frac{\partial\mathcal{M}_{\nu,\theta}({\eta}) }{\partial U_{\eta}}d{\eta} \cr
&+(\mathcal{T}_{\nu,\theta f}-\mathcal{T}_{\nu,\theta g})\int_{0}^{1}\frac{\partial\mathcal{M}_{\nu,\theta}({\eta}) }{\partial\mathcal{T}_{\nu,\theta\eta}}d{\eta} \cr
&+(T_{I,\delta f}-T_{I,\delta g})\int_{0}^{1}\frac{\partial\mathcal{M}_{\nu,\theta}({\eta}) }{\partial T_{I,\delta {\eta}}}d{\eta} \cr
&=I_1+I_2+I_3+I_4.
\end{split}
\end{align}
We only consider $I_3$. Other terms can be treated in a similar and simpler manner.
Recalling the definition of $\mathcal{T}_{\nu,\theta}$, we see that
\begin{align*}
&\rho_f\mathcal{T}_{\nu,\theta f}-\rho_g\mathcal{T}_{\nu,\theta g}\cr
&=(1-\theta)\rho_f\{(1-\nu)T_{trf}Id+\nu\Theta_f\}+\theta\rho_fT_{\delta f}Id-(1-\theta)\rho_g\{(1-\nu)T_{trg}Id+\nu\Theta_g\}-\theta \rho_gT_{\delta g}Id\cr
&=(1-\theta)
\int_{\mathbb{R}^3\times\mathbb{R}^+}f\Big\{\frac{(1-\nu)}{3}|v-U_f|^2Id+\nu(v-U_f)\otimes(v-U_f)\Big\}dvdI \cr
&+\frac{\theta}{3+\delta}
\int_{\mathbb{R}^3\times\mathbb{R}^+}f\big\{|v-U_f|^2+2I^{\frac{2}{\delta}}\big\}IddvdI	\cr
&-(1-\theta)
\int_{\mathbb{R}^3\times\mathbb{R}^+}g\Big\{\frac{(1-\nu)}{3}|v-U_g|^2Id+\nu(v-U_g)\otimes(v-U_g)\Big\}dvdI \cr
&-\frac{\theta}{3+\delta}
\int_{\mathbb{R}^3\times\mathbb{R}^+}g\Big\{|v-U_g|^2+2I^{\frac{2}{\delta}}\Big\}IddvdI,
\end{align*}
which can be rearranged as
\begin{align*}
&\Big\{(1-\theta)\frac{1-\nu}{3}+\frac{\theta}{3+\delta}\Big\}\bigg(\int_{\mathbb{R}^3\times\mathbb{R}^+}(f|v-U_f|^2-g|v-U_g|^2) dvdI\bigg)Id \cr
&\qquad+(1-\theta)\nu
\int_{\mathbb{R}^3\times\mathbb{R}^+}\{f(v-U_f)\otimes(v-U_f)-g(v-U_g)\otimes(v-U_g)\}dvdI\cr
&\qquad+\frac{2\theta}{3+\delta}\bigg(\int_{\mathbb{R}^3\times\mathbb{R}^+}(f-g)I^{\frac{2}{\delta}}dvdI\bigg)Id \cr
&\qquad\equiv T_1+T_2+T_3.
\end{align*}
For $T_1$, we note that
\begin{align*}
\int_{\mathbb{R}^3 \times \mathbb{R}^+}f|v-U_f|^2-g|v-U_g|^2 dvdI =
\int_{\mathbb{R}^3 \times \mathbb{R}^+} (f-g)|v|^2dvdI-\big(\rho_f|U_f|^2-\rho_g|U_g|^2\big).
\end{align*}
The first term is clearly bounded by $C\|f-g\|_{L^{\infty}_q}~ (q>3)$.
For the second term, we compute
\begin{align*}
\Big|\rho_f|U_f|^2-\rho_g|U_g|^2\Big|&=\left|\frac{\rho^2_f|U_f|^2-\rho^2_g|U_g|^2}{\rho_f}+\frac{\rho^2_g}{\rho_f}|U_g|^2
-\rho_g|U_g|^2\right|\cr
&\leq\frac{1}{\rho_f}\Big(|\rho_f U_f|+|\rho_gU_g|\Big)\big|\rho_fU_f-\rho_gU_g\big|
+\frac{1}{\rho_f\rho_g}|\rho_gU_g|^2|\rho_f-\rho_g |.
\end{align*}
Now, since $f, g\in \Omega$, we have from $(\mathcal{A}1)$
\[
|\rho_fU_f|+|\rho_gU_g|\leq C_{\delta,q}\big(\|f\|_{L^{\infty}_q}+\|g\|_{L^{\infty}_q}\big)\leq C_{\delta,q}e^{C_{1}t}\|f_0\|_{L^{\infty}_q}
\]
and $\rho_f, \rho_g\geq C_{T,f_0}$. Therefore,
\begin{align*}
\Big|\rho_f|U_f|^2-\rho_g|U_g|^2\Big|&\leq
C_{T,f_0,\delta,q}\left\{\big|\rho_fU_f-\rho_gU_g\big|
+|\rho_f-\rho_g |\right\}\cr
&\leq C_{T,f_0,\delta,q}\int_{\mathbb{R}^3\times\mathbb{R}^+} |f-g|(1+|v|^2)dvdI\cr
&\leq C_{T,f_0,\delta,q}\|f-g\|_{L^{\infty}_q}.
\end{align*}
The estimate for $T_2$ can be derived from a similar computation using the following identity:
\begin{align*}
\{(1-\theta)\nu\}^{-1}T_2
&=
\int_{\mathbb{R}^3\times\mathbb{R}^+}(f-g)v\otimes vdvdI \cr
&-
\frac{1}{\rho_f}\rho_fU_f\otimes(\rho_fU_f-\rho_gU_g)-\frac{\rho_g}{\rho_f}(\rho_fU_f-\rho_gU_g)\otimes U_g\cr
&+\frac{1}{\rho_f\rho_g}(\rho_f-\rho_g)\big\{\rho_gU_g\big\}\otimes \big\{\rho_gU_g\big\}.
\end{align*}
We omit the estimate for $T_3$. What we have shown so far is
\[
|\rho_f\mathcal{T}_{\nu,\theta f}-\rho_g\mathcal{T}_{\nu,\theta g}|\leq C_{T,f_0,\delta,\theta,q}\|f-g\|_{L_q^{\infty}}.
\]
Therefore,
\begin{align*}
|\mathcal{T}_{\nu,\theta f}-\mathcal{T}_{\nu,\theta g}| &\leq \frac{1}{\rho_f}|\rho_f\mathcal{T}_{\nu,\theta f}-\rho_g\mathcal{T}_{\nu,\theta g}| +\frac{1}{\rho_f}|\rho_f-\rho_g||\mathcal{T}_{\nu,\theta g}| \leq
C_{T,f_0,\delta,\theta,q}\|f-g\|_{L_q^{\infty}},
\end{align*}
where we used Lemma \ref{Temperature} and Property $(\mathcal{A}2) (1)$ of $\Omega$ as
\begin{align*}
\rho_f\geq C_{T,f_0},
\end{align*}
and
\begin{align}\label{rec}
\begin{split}
|\mathcal{T}^{ij}_{\nu,\theta g}|
&\leq
\frac{1-\theta}{\rho_{g}}\left|\int_{\mathbb{R}^3\times\mathbb{R}^+}g\left\{\frac{(1-\nu)}{3}|v-U_{g}|^2\delta_{ij}+\nu(v-U_{g})_i(v-U_{g})_j\right\}dvdI\right| \cr
&+
\frac{\theta}{\rho_{g}}\left|\int_{\mathbb{R}^3\times\mathbb{R}^+}g\left\{\frac{1}{3+\delta}\left(|v-U_{g}|^2+2I^{\frac{2}{\delta}}\right)\delta_{ij}\right\}dvdI\right| \cr
&\leq(3+\delta-2\theta-\delta\theta) T_{\delta g}.
\end{split}
\end{align}
We now move on to the estimate of the integral in $I_3$. A straightforward computation gives
\begin{align*}
&\int_{0}^{1}\frac{\partial\mathcal{M}_{\nu,\theta}({\eta}) }{\partial \mathcal{T}_{\nu,\theta\eta}}d{\eta} \cr
&\quad=
\int_{0}^{1}\frac{1}{2}
\left[-\frac{1}{\det\mathcal{T}_{\nu,\theta\eta}}\frac{\partial\det\mathcal{T}_{\nu,\theta\eta}}{\partial\mathcal{T}_{{\nu,\theta\eta}ij}}+(v-U_{\eta})^{\top}\mathcal{T}_{\nu,\theta\eta}^{-1}\left(\frac{\partial\mathcal{T}_{\nu,\theta\eta}}{\partial\mathcal{T}_{{\nu,\theta\eta}ij}}\right)\mathcal{T}_{\nu,\theta\eta}^{-1}(v-U_{\eta})\right]\mathcal{M}_{\nu,\theta}({\eta})d{\eta}.
\end{align*}
Before proceeding further, we establish the following claims: For $f,g\in \Omega$, we have
\begin{align*}
(\mathcal{F}_1) &:~ \left|(v-U_{\eta})^{\top}\mathcal{T}_{\nu,\theta\eta}^{-1}\left(\frac{\partial\mathcal{T}^{ij}_{\nu,\theta\eta}}{\partial\mathcal{T}_{{\nu,\theta\eta}}}\right)\mathcal{T}_{\nu,\theta \eta}^{-1}(v-U_{\eta})\right|\leq\frac{|v-U_{\eta}|^2}{[\theta\{\eta T_{\delta f}+(1-\eta)T_{\delta g}\}]^2}  \cr
(\mathcal{F}_2) &:~\left|\det\mathcal{T}_{\nu,\theta\eta}\right| \geq
 \theta^3\big\{\eta T_{\delta f}+(1-\eta)T_{\delta g} \big\}^3\cr
(\mathcal{F}_3) &:~\left|\frac{\partial\det\mathcal{T}_{\nu,\theta\eta}}{\partial\mathcal{T}^{ij}_{\nu,\theta\eta }}\right|
\leq  2(3+\delta-2\theta-\delta\theta)^2\{\eta T_{\delta f}+(1-\eta)T_{\delta g}\}^2.\cr
\end{align*}

\noindent$\bullet~(\mathcal{F}_1)$: Let $D_{ij}$ denote a $n \times n$ matrix whose $ij$th and $ji$th entries are 1 and the remaining entries are 0. Then,
\[
\left|X^{\top}\left(\frac{\partial\mathcal{T}_{\nu,\theta\eta}}{\partial\mathcal{T}^{ij}_{\nu,\theta\eta }}\right)Y\right|=|X^{\top}D_{ij}Y| \leq |X_iY_j+X_jY_i|\leq|X||Y|.
\]
Thus we have
\begin{align*}
\left|(v-U_{\eta})^{\top}\mathcal{T}_{\nu,\theta\eta}^{-1}\left(\frac{\partial\mathcal{T}_{\nu,\theta\eta}}{\partial\mathcal{T}^{ij}_{{\nu,\theta\eta}}}\right)\mathcal{T}_{\nu,\theta \eta}^{-1}(v-U_{\eta})\right|
&\leq
|(v-U_{\eta})^{\top}\mathcal{T}_{\nu,\theta \eta}^{-1}||\mathcal{T}_{\nu,\theta \eta}^{-1}(v-U_{\eta})|.
\end{align*}
Now  we use Lemma \ref{Temperature}:
\begin{align}\label{31}
\mathcal{T}_{\nu,\theta\eta}=\eta\mathcal{T}_{\nu,\theta f}+(1-\eta)\mathcal{T}_{\nu,\theta g} \geq  \left[\theta\{\eta T_{\delta f}+(1-\eta)T_{\delta g}\}\right]Id
\end{align}
to compute
\begin{align*}
|(v-U_{\eta})^{\top}\mathcal{T}_{\nu,\theta \eta}^{-1}|&\leq \sup_{|Y|\leq 1}|(v-U_{\eta})^{\top}\mathcal{T}_{\nu,\theta \eta}^{-1}Y|\cr
&\leq \sup_{|Y|\leq 1}\frac{|v-U_{\eta}||Y|}{\theta \{\eta T_{\delta f}+(1-\eta)T_{\delta g}\}}\cr
&\leq \frac{|v-U_{\eta}|}{\theta\{\eta T_{\delta f}+(1-\eta)T_{\delta g}\}}.
\end{align*}
Likewise,
\begin{align*}
|\mathcal{T}_{\nu,\theta \eta}^{-1}(v-U_{\eta})|\leq \frac{|v-U_{\eta}|}{\theta\{\eta T_{\delta f}+(1-\eta)T_{\delta g}\}},
\end{align*}
which gives the desired estimate.\newline
$\bullet~(\mathcal{F}_2)$:
By (\ref{31}), we have
\[
\det\mathcal{T}_{\nu,\theta\eta}\geq \theta^3\big\{\eta T_{\delta f}+(1-\eta)T_{\delta g} \big\}^3.
\]
$\bullet~(\mathcal{F}_3)$:
We only prove the case: $(i,j)=(1,2)$. An explicit calculation gives
\begin{align}\label{F3}
\frac{\partial\det\mathcal{T}_{\nu,\theta\eta}}{\partial\mathcal{T}^{12}_{\nu,\theta\eta}}
=\mathcal{T}^{23}_{\nu,\theta\eta}\mathcal{T}^{31}_{\nu,\theta\eta}-\mathcal{T}^{33}_{\nu,\theta\eta}
\mathcal{T}^{21}_{\nu,\theta\eta}.
\end{align}
Then we recall (\ref{rec}) to derive
\begin{align*}
|\mathcal{T}^{ij}_{\nu,\theta\eta }|=|\eta\mathcal{T}^{ij}_{\nu,\theta f}+(1-\eta)\mathcal{T}^{ij}_{\nu,\theta g}|\leq (3+\delta-2\theta-\delta\theta)|\eta T_{\delta f}+(1-\eta)T_{\delta g}|.
\end{align*}
Therefore, (\ref{F3}) leads to
\[
\bigg|\frac{\partial\det\mathcal{T}_{\nu,\theta\eta}}{\partial\mathcal{T}^{12}_{\nu,\theta\eta}}\bigg|\leq 2(3+\delta-2\theta-\delta\theta)^2\{\eta T_{\delta f}+(1-\eta)T_{\delta g}\}^2.
\]
This ends the proof of the claims.\newline

\noindent Using these claims, we estimate the integral in $I_3$ as ($T_{\delta\eta}=\eta T_{\delta f}+(1-\eta)T_{\delta g}$.)
\begin{align}\label{hard}
\int_{0}^{1}\frac{\partial\mathcal{M}_{\nu,\theta}({\eta}) }{\partial \mathcal{T}_{\nu,\theta\eta}}d{\eta}
\leq C_{\delta,\theta}\int_{0}^{1} \bigg[\frac{1}{T_{\delta\eta}}+\frac{|v-U_{\eta}|^2}{T_{\delta\eta}^2} \bigg]\mathcal{M}_{\nu,\theta}({\eta})d{\eta}.
\end{align}
Now, since $f,g\in \Omega$, we have
\begin{align*}
 C_{T,f_0,\delta,1}\leq T_{\delta\eta}=\eta T_{\delta f}+(1-\eta)T_{\delta g}\leq C_{T,f_0,\delta,2},
\end{align*}
and
\begin{align*}
\mathcal{M}_{\nu,\theta}(\eta)&\leq
C_{T,f_0,\delta,q}\frac{\rho_{\eta}}{\theta^{\frac{3+\delta}{2}}T_{\delta\eta}^{\frac{3+\delta}{2}}}\cr
&\times \exp\left(-\frac{3}{2C_{\nu}\{3+\delta(1-\theta)\}}\frac{|v-U_{\eta}|^2}{T_{\delta\eta}}\right)
\exp\left(-\frac{\delta}{\delta+3(1-\theta)}\frac{I^{\frac{2}{\delta}}}{T_{\delta\eta}}\right)
\cr
&\leq C_{T,f_0,\delta,\theta,q}e^{-C_{T,f_0,\delta,\theta,q}\big(|v-U_{\eta}|^2+I^{2/\delta}\big)}.
\end{align*}
Therefore, we can proceed further from (\ref{hard}) as
\begin{align*}
\int_{0}^{1}\frac{\partial\mathcal{M}_{\nu,\theta}({\eta}) }{\partial \mathcal{T}_{\nu,\theta\eta}}d{\eta}
&\leq
C_{T,f_0,\delta,\theta,q}\int_{0}^{1} \big(1+|v-U_{\eta}|^2\big)e^{-C_{T,f_0,\delta,\theta,q}(|v-U_{\eta}|^2+I^{2/\delta})}d{\eta} \cr
&\leq
C_{T,f_0,\delta,\theta,q}\int_{0}^{1}e^{-C_{T,f_0,\delta,\theta,q}(|v-U_{\eta}|^2+I^{2/\delta})}d{\eta}\cr
&\leq
C_{T,f_0,\delta,\theta,q}\int_{0}^{1}e^{-C_{T,f_0,\delta,\theta,q}(|v|^2+I^{2/\delta})}d{\eta}\cr
&\leq
C_{T,f_0,\delta,\theta,q}e^{-C_{T,f_0,\delta,\theta,q}(|v|^2+I^{2/\delta})}.
\end{align*}
In the last line, we have used
\begin{align*}
|U_{\eta}|\leq \eta|U_f|+(1-\eta)|U_g|\leq C_{T,f_0,\delta,q}
\end{align*}
which holds when $f,g\in \Omega$.\newline

Finally, we  turn to the proof of the proposition, which is almost done. Plugging the above inequalities into (\ref{Msplit}), we have
\begin{align*}
&|\mathcal{M}_{\nu,\theta}(f)-\mathcal{M}_{\nu,\theta}(g)|\cr
&\leq C\Big\{|\rho_f-\rho_g|+|U_f-U_g|+|\mathcal{T}_{\nu,\theta f}-\mathcal{T}_{\nu,\theta g}|+|T_{I,\delta f}-T_{I,\delta g}|\Big\}e^{-C(|v|^2+I^{2/\delta})} \cr
&\leq C\|f-g\|_{L_q^{\infty}}e^{-C(|v|^2+I^{2/\delta})}.
\end{align*}
Multiplying $(1+|v|^2+I^{\frac{2}{\delta}})^\frac{q}{2}$ and taking supremum  on both sides, we get the desired estimate.
\end{proof}
\section{proof of the main theorem}
In the mild form, (\ref{mildform}) reads
\begin{align*}
f^{n+1}(x,v,t,I)&= e^{-A_{\nu,\theta}t}f_0(x-vt,v,I) +A_{\nu,\theta}\int_{0}^{t} e^{-A_{\nu,\theta}(t-s)} \mathcal{M}_{\nu,\theta}(f^{n})(x-(t-s)v,v,s,I)ds, \cr
f^{n}(x,v,t,I)&= e^{-A_{\nu,\theta}t}f_0(x-vt,v,I) +A_{\nu,\theta}\int_{0}^{t} e^{-A_{\nu,\theta}(t-s)} \mathcal{M}_{\nu,\theta}(f^{n-1})(x-(t-s)v,v,s,I)ds.
\end{align*}
Taking difference and applying Proposition \ref{Lip_int}, we obtain
\begin{align*}
\|f^{n+1}(t)-f^{n}(t)\|_{L_q^{\infty}} &\leq
A_{\nu,\theta}\int_{0}^{t} e^{-A_{\nu,\theta}(t-s)}\|\mathcal{M}_{\nu,\theta}(f^{n}(t))-\mathcal{M}_{\nu,\theta}(f^{n-1}(t))\|_{L_q^{\infty}}ds \cr
&\leq A_{\nu,\theta}C_{Lip} \int_{0}^{t} \|f^{n}(t)-f^{n-1}(t)\|_{L_q^{\infty}} ds.
\end{align*}
Iterating this inequality,
\begin{align*}
\|f^{n+1}(t)-f^{n}(t)\|_{L_q^{\infty}} &\leq
A_{\nu,\theta}^nC_{Lip}^n \int_{0}^{t}\int_{0}^{s_1}\cdots\int_{0}^{s_{n-1}}\|f^1(s_n)-f^0(s_n)\|_{L_q^{\infty}}ds_n\cdots ds_2ds_1 \cr
&\leq
A_{\nu,\theta}^nC_{Lip}^n \int_{0}^{t}\int_{0}^{s_1}\cdots\int_{0}^{s_{n-1}}e^{-A_{\nu,\theta}s_n}\|f_0\|_{L_q^{\infty}}ds_n\cdots ds_2ds_1 \cr
&\leq
A_{\nu,\theta}^nC_{Lip}^n \frac{t^n}{n!}\|f_0\|_{L_q^{\infty}}.
\end{align*}
This immediately gives for $n > m$
\begin{align*}
\sup_{0\leq t\leq T}\|f^n(t)-f^{m}(t)\|_{L_q^{\infty}}\leq \left(e^{A_{\nu,\theta}C_{Lip}T}-\sum_{k=0}^{m-1}\frac{\big(A_{\nu,\theta}C_{Lip}T\big)^k}{k!}\right)\|f_0\|_{L_q^{\infty}}.
\end{align*}
Therefore, we conclude that $\{f^n\}$ is a Cauchy sequence and converges to an element, say $f$, in $\Omega$. It is standard to check that $f$ is the mild solution:
\begin{align*}
f(t,x,v,I)=e^{-A_{\nu,\theta}t}f_0(x-vt,v,I)+A_{\nu,\theta}\int_{0}^{t}e^{-A_{\nu,\theta}(t-s)}\mathcal{M}_{\nu,\theta}(f)(s,x-(t-s)v,v,I)ds.
\end{align*}
This proves the existence and estimates $(1)$ and $(2)$.\newline
For the proof of conservation laws, we find that Proposition (\ref{MconF_theta}) and the  Lebesgue differentiation theorem give from the above mild form,
\begin{align*}
\frac{d}{dt}f(t,x+tv,v,I)=A_{\nu,\theta}\{\mathcal{M}_{\nu,\theta}(f)-f\}(t,x+tv,v,I)
\end{align*}
for almost all $t$. Multiplying $1,v,\frac{1}{2}|v|^2+I^{\frac{2}{\delta}}$ on both sides and integrating with respect to $x,v, I$, we obtain $(3)$.\newline
The entropy dissipation estimate in the form of $(4)$ was established in \cite{ALPP,BS,PY2} at the formal level.
But the lower and upper bounds for $f$ and the macroscopic fields justify all those formal
computations given in \cite{ALPP,BS,PY2}. This completes the proof.

%
%
%
%
%
\section{When $\theta=0$}
Recall that the l.h.s of equivalence estimates in Lemma \ref{Temperature} vanish, and the r.h.s of the inequality in Proposition \ref{MconF_theta} blows up, as $\theta$ tends to zero. Therefore, the argument we've developed so far does not work for $\theta=0$.
We, however, observe that the polyatomic Gaussian $\mathcal{M}_{\nu,\theta}(f)$ in this case is completely split into the  translational internal energy part and the non-translational internal energy part as follows:
\begin{align*}
\mathcal{M}_{\nu,0}(f)&=\frac{\rho\Lambda_{\delta}}{\sqrt{\det(2\pi\mathcal{T}_{\nu,0})}T_{I,\delta}^{\frac{\delta}{2}}}e^{-\frac{1}{2}(v-U)^{\top}\mathcal{T}_{\nu,0}^{-1}(v-U)-\frac{I^{\frac{2}{\delta}}}{T_{I,\delta}}}\cr
&=\left(\frac{\rho}{\sqrt{\det(2\pi\mathcal{T}_{\nu,0})}}e^{-\frac{1}{2}(v-U)^{\top}\mathcal{T}_{\nu,0}^{-1}(v-U)}\right)\Bigg(\frac{\Lambda_{\delta}}{T_{I,\delta}^{\delta/2}}e^{-\frac{I^{2/\delta}}{T_{I,\delta}}}\Bigg)
\end{align*}
in the sense that $\mathcal{T}_{\nu,0}$ and $T_{I,\delta}$ does not share the common factor $T_{\delta}$. Now,
if we define
\begin{align*}
g(t,x,v)=\int_{\mathbb{R}^+}f(t,x,v,I)dI
\end{align*}
and integrate $(\ref{ESBGK})$ with respect to $I$, we get
\begin{align}\label{monatomic}
\begin{split}
\partial_tg+v\cdot\nabla_xg&=A_0\left\{\mathcal{M}_{\nu}(g)-g\right\}\cr
g_0(x,v)&=\int_{\mathbb{R}_+}f_0(x,v,I)dI,
\end{split}
\end{align}
where
\[
\mathcal{M}_{\nu}(g)\equiv\int_{\mathbb{R}_+}\mathcal{M}_{\nu,0}(f)dI=\frac{\rho}{\sqrt{\det(2\pi\mathcal{T}_{\nu})}}e^{-\frac{1}{2}(v-U)^{\top}\mathcal{T}_{\nu}^{-1}(v-U)}
\]
and
\[
\mathcal{T}_{\nu}\equiv \mathcal{T}_{\nu,0}=(1-\nu)T_{tr}+\nu\Theta.
\]
Note that $\rho$, $U$, $\mathcal{T}_{\nu}$ are naturally interpreted as macroscopic fields of $g$, and hence, so is $\mathcal{M}_{\nu}(g)$.
This is exactly the ellipsoidal BGK model for monatomic particles  \cite{ALPP,BS,Holway}. Relevant existence result for (\ref{monatomic}) can be found in \cite{PY1,Yun2, Yun3}.
Thus, in the case that $\theta=0$, our problem (\ref{ESBGK}) should be understood in the form (\ref{monatomic}).
This dichotomy between the two case, $\theta=0$ and $0<\theta\leq1$, was also observed in \cite{PY2,Yun33}.
\section{Appendix: Conservation laws}
In this appendix, we prove the cancellation property (\ref{cancellation}) for reader's convenience.\newline
$\bullet$ Mass conservation: Note that
\begin{align}\label{1}
\begin{split}
&\int_{\mathbb{R}^3\times\mathbb{R}^+}\mathcal{M}_{\nu,\theta}(f)dvdI \cr
&\qquad=\int_{\mathbb{R}^3\times\mathbb{R}^+}\frac{\rho\Lambda_{\delta}}{
\sqrt{\det(2\pi \mathcal{T}_{\nu,\theta})}(T_\theta)^\frac{\delta}{2}}e^{-\frac{1}{2}(v-U)^{\top}\mathcal{T}_{\nu,\theta}^{-1}(v-U)-\frac{I^{2/\delta}}{T_{\theta}}}dvdI \cr
&\qquad=\rho\int_{\mathbb{R}^3}\frac{1}{\sqrt{\det(2\pi\mathcal{T}_{\nu,\theta})}}e^{-\frac{1}{2}(v-U)^{\top}\mathcal{T}_{\nu,\theta}^{-1}(v-U)}dv \int_{\mathbb{R}^+}\frac{\Lambda_{\delta}}{T_{\theta}^{\frac{\delta}{2}}}e^{-\frac{I^{2/\delta}}{T_{\theta}}}dI.
\end{split}
\end{align}
Make a change of variable $X=\frac{1}{\sqrt{2}}\mathcal{T}_{\nu,\theta}^{-\frac{1}{2}}(v-U)$, so that
\begin{align*}
dX&=(\sqrt{2})^{-3}\det\mathcal{T}_{\nu,\theta}^{-\frac{1}{2}}dv
=\frac{(\sqrt{\pi})^3dv}{\sqrt{\det(2\pi\mathcal{T}_{\nu,\theta})}},\cr
\frac{1}{2}(v-U)^{\top}\mathcal{T}^{-1}_{\nu,\theta}(v-U)&=\left\{\frac{1}{\sqrt{2}}\mathcal{T}^{-1/2}_{\nu,\theta}(v-U)\right\}^{\top}\left\{\frac{1}{\sqrt{2}}\mathcal{T}^{-1/2}_{\nu,\theta}(v-U)\right\}=X^{\top}X=|X|^2.
\end{align*}
Therefore,
\begin{align*}
\int_{\mathbb{R}^3}\frac{1}{\sqrt{\det(2\pi\mathcal{T}_{\nu,\theta})}}e^{-\frac{1}{2}(v-U)^{\top}\mathcal{T}_{\nu,\theta}^{-1}(v-U)}dv
&=\frac{1}{(\sqrt{\pi})^3}\int_{\mathbb{R}^3}e^{-|X|^2}dX=1.
\end{align*}
On the other hand, putting $I/(T_{\theta})^{\delta/2}=J$,  we get
\begin{align*}
\int_{\mathbb{R}^+}\frac{\Lambda_{\delta}}{T_{\theta}^{\frac{\delta}{2}}}e^{-\frac{I^{2/\delta}}{T_{\theta}}}dI
&=
\Lambda_{\delta}\int_{\mathbb{R}^+}e^{-J^{2/\delta}}dJ=1.
\end{align*}
Hence, we have from (\ref{1})
\[
\int_{\mathbb{R}^3\times\mathbb{R}^+}\mathcal{M}_{\nu,\theta}(f)dvdI =\rho=\int_{\mathbb{R}^3\times\mathbb{R}^+}fdvdI.
\]
\noindent$\bullet$ Momentum conservation: We write
\begin{align}\label{2}
\begin{split}
\int_{\mathbb{R}^3\times\mathbb{R}^+}v\mathcal{M}_{\nu,\theta}(f)dvdI
&=\int_{\mathbb{R}^3\times\mathbb{R}^+}(v-U+U)\mathcal{M}_{\nu,\theta}(f)dvdI \cr
&=\int_{\mathbb{R}^3\times\mathbb{R}^+}(v-U)\mathcal{M}_{\nu,\theta}(f)dvdI +\rho U,
\end{split}
\end{align}
and make the same change of variable: $X=\frac{1}{\sqrt{2}}\mathcal{T}_{\nu,\theta}^{-\frac{1}{2}}(v-U)$, $I/(T_{\theta})^{\delta/2}=J$ to get
\begin{align*}
&\int_{\mathbb{R}^3\times\mathbb{R}^+}(v-U)\mathcal{M}_{\nu,\theta}(f)dvdI \cr
&\quad=\rho\int_{\mathbb{R}^3}\frac{(v-U)}{\sqrt{\det(2\pi\mathcal{T}_{\nu,\theta})}}e^{-\frac{1}{2}(v-U)^{\top}\mathcal{T}_{\nu,\theta}^{-1}(v-U)}dv \int_{\mathbb{R}^+}\frac{\Lambda_{\delta}}{T_{\theta}^{\frac{\delta}{2}}}e^{-\frac{I^{2/\delta}}{T_{\theta}}}dI \cr
&\quad=
\frac{\sqrt{2}\rho \mathcal{T}^{1/2}_{\nu,\theta}}{(\sqrt{\pi})^3}
\int_{\mathbb{R}^3}Xe^{-|X|^2}dX \cr
&\quad=0.
\end{align*}
Therefore, (\ref{2}) yields
\[
\int_{\mathbb{R}^3\times\mathbb{R}^+}v\mathcal{M}_{\nu,\theta}(f)dvdI=\rho U=
\int_{\mathbb{R}^3\times\mathbb{R}^+}vfdvdI.
\]
\noindent$\bullet$ Energy conservation:
To compute the translational part, we again set $X=\frac{1}{\sqrt{2}}\mathcal{T}_{\nu,\theta}^{-\frac{1}{2}}(v-U)$. Then, since
\[
|v-U|^2=(v-U)^{\top}(v-U)=\big(\sqrt{2}\mathcal{T}^{1/2}_{\nu,\theta}X\big)^{\top}\big(\sqrt{2}\mathcal{T}^{1/2}_{\nu,\theta}X\big)
=2X^{\top}\mathcal{T}_{\nu,\theta}X,
\]
we have
\begin{align}\label{energy conservation2}
\begin{split}
\int_{\mathbb{R}^3\times\mathbb{R}^+}\frac{|v-U|^2}{2}
\mathcal{M}_{\nu,\theta}(f)dvdI
&= \frac{\rho}{(\sqrt{\pi})^3}
\int_{\mathbb{R}^3} \big\{X^{\top}\mathcal{T}_{\nu,\theta}X\big\} e^{-|X|^2}dX \cr
&=
\frac{\rho}{(\sqrt{\pi})^3}
\int_{\mathbb{R}^3} \bigg\{\sum_{ij}X_iX_j\mathcal{T}^{ij}_{\nu,\theta}\bigg\}e^{-|X|^2}dX \cr
&=
\frac{\rho}{(\sqrt{\pi})^3}
\int_{\mathbb{R}^3} \bigg\{\sum_{i=1}^3X_i^2\mathcal{T}^{ii}_{\nu,\theta}\bigg\}e^{-|X|^2}dX \cr
&=
\frac{\rho}{(\sqrt{\pi})^3}
\sum_{i=1,2,3}\mathcal{T}^{ii}_{\nu,\theta}\bigg(\int_{\mathbb{R}^3}X_i^2e^{-|X|^2}dX\bigg) \cr
&= \frac{\rho}{2}tr\mathcal{T}_{\nu,\theta} \cr
&=
\frac{3}{2}\rho\big\{(1-\theta)T_{tr}+\theta T_{\delta}\big\},
\end{split}
\end{align}
where we used
\[
\bigg(\int_{\mathbb{R}^3}X_i^2e^{-|X|^2}dX\bigg)=\frac{\sqrt{\pi^3}}{2}.
\]
For the non-translational part, one finds
\begin{align*}
&\int_{\mathbb{R}^3\times\mathbb{R}^+}I^{2/\delta}
\mathcal{M}_{\nu,\theta}(f)dvdI \cr
&\qquad=
\rho\Lambda_{\delta}\int_{\mathbb{R}^+}\frac{I^{2/\delta}}{T_{\theta}^{\delta/2}}e^{-\frac{I^{2/\delta}}{T_{\theta}}}dI\int_{\mathbb{R}^3}\frac{1}{\sqrt{\det(2\pi\mathcal{T}_{\nu,\theta})}}e^{-\frac{1}{2}(v-U)^{\top}\mathcal{T}_{\nu,\theta}^{-1}(v-U)}dv \cr
&\qquad=\rho\Lambda_{\delta}\int_{\mathbb{R}^+}\frac{I^{2/\delta}}{T_{\theta}^{\delta/2}}e^{-\frac{I^{2/\delta}}{T_{\theta}}}dI.
\end{align*}
Let $X=I^{2/\delta}/T_{\theta}$, then $dI=\frac{\delta}{2}T_{\theta}^{\delta/2}X^{\delta/2-1}dX$, and thus,
\begin{align}\label{energy conservation1}
\begin{split}
\int_{\mathbb{R}^3\times\mathbb{R}^+}I^{2/\delta}
\mathcal{M}_{\nu,\theta}(f)dvdI
&=\rho\Lambda_{\delta}\int_{\mathbb{R}^+}\frac{I^{2/\delta}}{T_{\theta}^{\delta/2}}e^{-\frac{I^{2/\delta}}{T_{\theta}}}dI \cr
&=\frac{\delta}{2}\rho\Lambda_{\delta}T_{\theta}\int_{\mathbb{R}^+}X^{\delta/2}e^{-X}dX \cr
&=\frac{\delta}{2}\rho T_{\theta},
\end{split}
\end{align}
where we have used
\begin{align*}
\frac{\delta/2\rho T_{\theta}\int_{\mathbb{R}^+}X^{\delta/2}e^{-X}dX}{\int_{\mathbb{R}^+}e^{-I^{2/\delta}}dI}
&=
\frac{\delta/2\rho T_{\theta}\int_{\mathbb{R}^+}X^{\delta/2}e^{-X}dX}{\delta/2\int_{\mathbb{R}^+}X^{\delta/2-1}e^{-X}dX} \cr
&=\frac{\rho T_{\theta}\left[\left(-X^{\delta/2}e^{-X}\right)_{X=0}^{X=\infty}+\delta/2\int_{\mathbb{R}^+}X^{\delta/2-1}e^{-X}dX \right] }{\int_{\mathbb{R}^+}X^{\delta/2-1}e^{-X}dX}\cr
&=\frac{\delta}{2}\rho T_{\theta}.
\end{align*}

Combining (\ref{energy conservation1}) with (\ref{energy conservation2}) and recalling the definition of $T_{\delta}$ in (\ref{convex}), we get
\begin{align*}
&\int_{\mathbb{R}^3\times\mathbb{R}^+}\left(\frac{|v-U|^2}{2}+I^{2/\delta}\right)
\mathcal{M}_{\nu,\theta}(f)dvdI \cr
&\qquad=\frac{3}{2}\rho\big\{(1-\theta)T_{tr}+\theta T_{\delta}\big\}
+\frac{\delta}{2}\rho \big\{(1-\theta)T_{I,\delta}+\theta T_{\delta}\big\}\cr
&\qquad=\frac{3+\delta}{2}(1-\theta)\rho\Big\{\frac{3}{3+\delta}T_{tr}+\frac{\delta}{3+\delta}\delta T_{I,\delta}\Big\}+\frac{3+\delta}{2}\rho\theta T_{\delta}\cr
&\qquad= \frac{3+\delta}{2}\rho T_{\delta}\cr
&\qquad=\int_{\mathbb{R}^3\times\mathbb{R}^+}\left(\frac{|v-U|^2}{2}+I^{2/\delta}\right)fdvdI.
\end{align*}
\noindent{\bf Acknowledgement}
This research was supported by Basic Science Research Program through the National Research Foundation of Korea(NRF) funded by the Ministry of Education(NRF-2016R1D1A1B03935955)
\bibliographystyle{amsplain}

\end{document}